\newcommand{\Rz}{\mathbb{R}}
\newcommand{\Rzsp}{\mathbb{R}_{>0}}
\newcommand{\Kn}{K\!n}
\newcommand{\Ma}{M\!a}
\newcommand{\Rey}{R\!e}
\newcommand{\matD}{\frac{\mathrm{D}}{\mathrm{D} t}}
\newcommand{\matDil}{\mathrm{D}/(\mathrm{D} t)}
\newcommand{\bm}{\boldsymbol}
\newtheorem{theorem}{Theorem}
\newtheorem{proposition}{Proposition}
\newtheorem{lemma}{Lemma}
\theoremstyle{definition}
\newtheorem{definition}{Definition}
\newtheorem{remark}{Remark}
\title{Limit consistency of lattice Boltzmann equations}
\author{
    \href{https://orcid.org/0000-0001-8555-4245}{\includegraphics[scale=0.06]{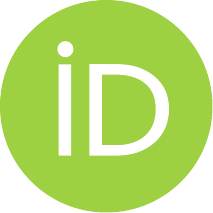}\hspace{1mm}Stephan Simonis}\thanks{Corresponding author}\\
	Institute for Applied and Numerical Mathematics\\
    Karlsruhe Institute of Technology\\
    76131 Karlsruhe, Germany \\
	\texttt{stephan.simonis@kit.edu} \\
	\And
    \href{https://orcid.org/0000-0003-1026-6462}{\includegraphics[scale=0.06]{orcid.pdf}\hspace{1mm}Mathias J. Krause}\\
    Lattice Boltzmann Research Group \\
    Karlsruhe Institute of Technology \\
    76131 Karlsruhe, Germany \\
}
\begin{document}
\maketitle

\begin{abstract}
We establish the notion of limit consistency as a modular part in proving the consistency of lattice Boltzmann equations (LBEs) with respect to a given partial differential equation (PDE) system. 
The incompressible Navier--Stokes equations (NSE) are used as paragon. 
Based upon the hydrodynamic limit of the Bhatnagar--Gross--Krook (BGK) Boltzmann equation towards the NSE, we provide a successive discretization by nesting conventional Taylor expansions and finite differences. 
We track the discretization state of the domain for the particle distribution functions and measure truncation errors at all levels within the derivation procedure. 
Via parametrizing equations and proving the limit consistency of the respective families of equations, we retain the path towards the targeted PDE at each step of discretization, i.e.\ for the discrete velocity BGK Boltzmann equations and the space-time discretized LBEs. 
As a direct result, we unfold the discretization technique of lattice Boltzmann methods as chaining finite differences and provide a generic top-down derivation of the numerical scheme which upholds the continuous limit.
\end{abstract}

\keywords{
    lattice Boltzmann methods \and 
	consistency \and 
	Navier--Stokes equations
}

\subjclass{65M12, 35Q20, 35Q30, 76D05}

\section{Introduction}

Due to the interlacing of discretization and relaxation, the lattice Boltzmann method (LBM) renders distinct advantages in terms of parallelizability \cite{wolf2000lattice,krause2010fluid}. 
Primarily out of this reason, meanwhile the LBM has become an established alternative to conventional approximation tools for the incompressible Navier--Stokes equations (NSE) \cite{lallemand2021lattice} where optimized scalability to high performance computers (HPC) is crucial. 
As such, LBM has been used with various extensions for applicative scenarios  \cite{krause2021openlb,
kummerlander2022olb15,
simonis2022forschungsnahe,
mink2021comprehensive,
dapelo2021lattice-boltzmann,
haussmann2021fluid-structure}, such as transient computer simulations of turbulent fluid flow with the help of assistive numerical diffusion \cite{haussmann2019direct,simonis2021linear} or large eddy simulation in space \cite{siodlaczek2021numerical} and in time \cite{simonis2022temporal}. 
Nonetheless, the LBM's relaxation principle does come at the price of inducing a bottom-up method, which stands in contrast to conventional top-down discretization techniques such as finite difference methods. 
This intrinsic feature complicates the rigorous numerical analysis of LBM. 
Although several contributions towards this aim exist (see for example \cite{elton1995convergence,
he1997theory,
lallemand2000theory,
junk2000discretizations,
junk2005asymptotic,
dubois2008equivalent,
banda2008lattice,
junk2009weighted,
ubertini2010three,
dellar2013interpretation} and recently \cite{masset2020linear,wissocq2022hydrodynamic}), limitations to specific target equations or LBM formulations persist and thus, to the knowledge of the authors, no generic theory has been established. 
In addition, non-predictable stability and thus convergence limitations for multi-relaxation-time LBM have been indicated by exploratory computing \cite{simonis2021linear} with the help of the highly parallel C++ data structure OpenLB \cite{krause2021openlb}. 
As a first step towards an analytical toolbox which is both, rigorous and modular, the recent works \cite{simonis2020relaxation,simonis2022constructing} shift the perspective on LBM from being exclusively suited for computational fluid dynamics towards a standalone numerical method for approximating partial differential equations (PDE) in general.
Therein, the authors propose an inverse path from the target PDE towards top-down constructed moment systems with generalized Maxwellians which partly form the basis of LBM. 
Here, we continue this path by top-down discretization on condition of limit preservation.

To that end, the present work establishes the abstract concept of limit consistency for the purpose of proving that the LBM-discretization retains an \textit{a priori} given continuous limit. 
In particular, we recall and elaborate the initial idea of Krause \cite{krause2010fluid}, where the classical consistency notion is modulated to analyze truncation errors with respect to families of equations. 
Based upon the proven diffusive limit \cite{saint-raymond2003bgk} of the parametrized Bhatnagar--Gross--Krook Boltzmann equation (BGKBE) towards the NSE, we provide a successive discretization by nesting conventional Taylor expansions and finite differences. 
We measure truncation errors at all levels within the derivation procedure, via keeping track of the discretization state of the domain for the particle distribution functions. 
Parametrizing the equations and proving the limit consistency of the respective families, we uphold the path towards the targeted PDE at each step of discretization, i.e.\ for the discrete velocity BGKBE and the space-time discretized lattice Boltzmann equation (LBE). 
As a direct result, we unfold the discretization technique of lattice Boltzmann methods as chaining finite differences and provide a generic top-down derivation of the numerical scheme. 
Validating our approach, the finite difference interpretation of LBM matches the previous \cite{junk2001finite} and current rigorous observations \cite{bellotti2022finite,bellotti2022rigorous} in the literature. 
Notably, the presented approach has been applied successfully to derive limit consistent LBMs for advection--diffusion equations and homogenized Navier--Stokes equations \cite{simonis2023pde}.

The rest of this paper is structured as follows. 
In Section~\ref{sec:continuousEquations}, we introduce the continuous equations on mesoscopic and macroscopic levels, and recall the formality of the passage from one to the other. 
In Section~\ref{sec:limitConsistency}, the notion of limit consistency is defined. 
Consequently, we discretize the mesoscopic equations and prove that the diffusive limit is consistently upheld in each level of discretization. 
Section~\ref{sec:conclusion} summarizes and assesses the presented results and suggests future studies. 

\section{Continuous mathematical models}\label{sec:continuousEquations}

\subsection{Targeted partial differential equation system}
We aim to approximate the \(d\)-dimensional incompressible Navier--Stokes equations (NSE) as an initial value problem (IVP)
\begin{align}\label{eq:incNSE}
\begin{cases} 
	\bm{\nabla} \cdot \bm{u} = 0  	 \quad &\text{in }  I \times \Omega , \\
	\partial_{t} \bm{u} + \frac{1}{\rho}\bm{\nabla} p + \bm{\nabla} \cdot \left( \bm{u} \otimes \bm{u} \right)  - \nu \bm{\Delta }\bm{u} = \bm{F} 				 \quad &\text{in }  I \times \Omega , \\
	\bm{u} \left(  0, \cdot \right) =  \bm{u}_{0}   \quad &\text{in } \Omega, 
\end{cases}
\end{align}
where the spatial domain \(\Omega \subseteq \mathbb{R}^{d}\) is periodically embedded in reals, \(I \subseteq \mathbb{R}_{\geq 0}\) denotes the time interval, and \((t, \bm{x}) \in I \times \Omega\). 
Further, \(\bm{u}\colon I\times \Omega \to \mathbb{R}^{3}\) is the velocity field with a smooth initial value \(\bm{u}_{0}\), \(p \colon I \times \Omega \to \mathbb{R} \) is the scalar pressure, \(\nu > 0 \) prescribes a given viscosity, \(\rho\colon I\times \Omega \to \mathbb{R}\) is an assumingly constant density, and \(\bm{F}\colon I\times \Omega \to \mathbb{R}^{d}\) denotes a given force field. 
Unless stated otherwise, \(\bm{\nabla}\) is the gradient operator and \(\bm{\Delta}\) the Laplace operator, respectively in space, and partial derivatives with respect to \(\cdot\) are abbreviated with \(\partial_{\cdot} \coloneqq \partial / (\partial\cdot)\).

\subsection{Kinetic equation system}

To render this work self-standing, we first recall the results of Krause \cite{krause2010fluid} and Saint-Raymond \cite{saint-raymond2003bgk} at the continuous level and proceed in Section~\ref{sec:limitConsistency} with the successive discretization down to the LBE. 
At all discretization levels, we reassure that the continuous limiting system \eqref{eq:incNSE} is sustained up to the desired order of magnitude in the parameter \(h > 0\) which is to be affiliated with the discretization. 

Let the domain $\Omega \subseteq \mathbb{R}^d$, with dimension $d=3$ unless stated otherwise, frame a large number of particles interacting through the compound of a rarefied gas. 
Equalizing the mass $m \in \mathbb{R}_{>0}$, the particles are interpreted as molecules.  
	\begin{definition}
The state of a one-particle system is assumed to depend on position $\bm{x} \in \Omega$ and velocity $\bm{v} \in \Xi$ at time $t \in I=[t_0,t_1] \subseteq \mathbb{R}$ with \(t_{1}>t_{0}>0\), where $\Omega \subseteq \mathbb{R}^d$ denotes the positional space, $\Xi = \mathbb{R}^d$ is the velocity space, \(\mathfrak{P} \coloneqq \Omega \times \Xi\) is the phase space, and the crossing \(\mathfrak{R} \coloneqq I \times \Omega \times \Xi\) defines the time-phase tuple. 
The probability density function 
\begin{align} \label{eq: boltz statistic f}
 f\colon\mathfrak{R} \to \mathbb{R}_{>0} , (t,\bm{x},\bm{v}) \mapsto f(t,\bm{x} , \bm{v} ) 
\end{align}
tracks the dynamics of the particle distribution and hence defines the state of the system.
The thus evoked evolution of probability density is reigned by the Boltzmann equation (BE)
\begin{align} \label{eq: boltz statistic equation 2}
  \left( \partial_{t} + \bm{v} \!\cdot\! \bm{\nabla}_{\bm{x}} + \frac{\bm{F}}{m} \!\cdot\! \bm{\nabla}_{\bm{v}} \right) f = J(f,f ), 
\end{align}
where \(f (0, \cdot, \cdot ) = f_{0}  \) supplements a suitable initial condition. 
The operator 
\begin{align}
J\left( f, f\right) =  \int_{\mathbb{R}^{3}} \int_{S^{2}} 
\vert \bm{v} - \bm{w} \vert 
\left[ 
f \left( t, \bm{x}, \bm{v}^{\prime} \right) 
f \left( t, \bm{x}, \bm{w}^{\prime} \right) 
- 
f \left( t, \bm{x}, \bm{v} \right) 
f \left( t, \bm{x}, \bm{w} \right) 
\right]
\,\mathrm{d}\bm{N} 
\,\mathrm{d}\bm{w}
\end{align}
models the collision, where \(\mathrm{d}\bm{N}\) is the normalized surface integral with the unit vector \(\bm{N}\in S^{2}\) and \(\left( \bm{v}^{\prime}, \bm{w}^{\prime} \right)^{\mathrm{T}} = T_{\bm{N}}\left( \bm{v}, \bm{w} \right)^{\mathrm{T}}\) result from the transformation \(T_{\bm{N}}\) that models hard sphere collision \cite{babovsky1998boltzmann}. 
\end{definition}
The following quantities are moments of $f$ created through integration over velocity. 
\begin{definition}\label{def:moments}
Let \(f\) be given in the sense of \eqref{eq: boltz statistic f}. 
Then we define the moments
\begin{align}
n_f &\colon\begin{cases}
I\times\Omega\to\mathbb{R}_{>0},  \\
(t,\bm{x})\mapsto n_f(t,\bm{x}) \coloneqq \int\limits_{\mathbb{R}^d} f(t,\bm{x},v) \,\mathrm{d}\bm{v}, 
\end{cases} \label{eq: boltz statistic f particle density} \\
\rho_f &\colon\begin{cases} 
I\times\Omega\to\mathbb{R}_{>0}, \\
(t,\bm{x})\mapsto \rho_f(t,\bm{x}) \coloneqq mn_f(t,\bm{x}), 
\end{cases} \label{eq: boltz statistic f mass density} \\
\bm{u}_f &\colon\begin{cases} 
I\times\Omega\to\mathbb{R}^{d}, \\
(t,\bm{x})\mapsto u_f(t,\bm{x}) \coloneqq \frac{1}{n_f(t,\bm{x})} \int\limits_{\mathbb{R}^d} \bm{v} f(t,\bm{x},\bm{v}) \,\mathrm{d}\bm{v}, 
\end{cases} \label{eq: boltz statistic f velocity} \\
\mathbf{P}_f &\colon\begin{cases} 
I\times\Omega\to\mathbb{R}^{d\times d},  \\
(t,\bm{x}) \mapsto \mathbf{P}_f(t,\bm{x}) \coloneqq m\int\limits_{\mathbb{R}^d} \left[\bm{v} - \bm{u}_f(t,\bm{x})\right] \otimes \left[\bm{v} - \bm{u}_f(t,\bm{x})\right] f(t,\bm{x},\bm{v})\,\mathrm{d}\bm{v}, 
\end{cases} \label{eq: boltz statistic f stress} \\
p_f &\colon \begin{cases} 
I\times\Omega \to \mathbb{R}_{>0}, \\
(t,\bm{x}) \mapsto p_f(t,\bm{x}) \coloneqq \frac{1}{d} \sum\limits^d_{i=1} \left(\mathbf{P}_f\right)_{i,i}(t,\bm{x}), 
\end{cases}
\label{eq: boltz statistic f pressure} 
\end{align}
respectively as particle density, mass density, velocity, stress tensor, and pressure. 
Here and in the following, the moments of $f$ are indexed correspondingly with \(\cdot_{f}\). 
\end{definition}
Notably, the absolute temperature $T$ is determined implicitly by an ideal gas assumption \begin{align}\label{eq:ideal gas law}
p_f=n_f RT,
\end{align}
where $R>0$ is the universal gas constant. 
To a dedicated order of magnitude in characteristic scales, the above moments approximate the macroscopic quantities conserved by the incompressible NSE \cite{gorban2018hilbert}. 
Equilibrium states $f^{\mathrm{eq}}$ such that 
\begin{align} \label{eq: boltz statistic equilibrium 1}
  J(f^{\mathrm{eq}}, f^{\mathrm{eq}}) =0 \quad \text{in }I\times\Omega,
\end{align}
exist \cite{gorban2018hilbert}. 
Via the gas constant \(R = k_{\mathrm{B}}/m \in\mathbb{R}_{>0}\) (where \(k_{\mathrm{B}}\) is the Boltzmann constant, and \(m\) is the particle mass), and $T\in\mathbb{R}_{>0}$, $n_f$ and $u_f$, the equilibrium state is found to be of Maxwellian form
\begin{align} \label{eq: boltz statistic equilibrium Max}
	f^{\mathrm{eq}}(t,\bm{x},\bm{v}) \colon 
	\mathfrak{R} \to \mathbb{R}, 
	(t,\bm{x},\bm{v}) \mapsto \frac{n_f(t,\bm{x})}{\left(2 \pi R T \right)^{\frac{d}{2}}} \exp\left(-\frac{\left[ \bm{v} - \bm{u}_f(t,\bm{x}) \right]^2}{2 R T}\right) .
\end{align}
\begin{remark}
We identify $f^{\mathrm{eq}}/n_f$ as $d$-dimensional normal distribution for $\bm{v}\in\mathbb{R}^d$ with expectation $\bm{u}_f$ and covariance $RT \mathbf{I}_d$. 
In this regard, the arguments of \(f^{\mathrm{eq}}\) regularly appear in terms of moments \(f^{\mathrm{eq}} ( n_{f}, \bm{u}_{f}, T)\) (e.g.\ \cite{krause2010fluid,he1997theory,junk2005asymptotic,lallemand2000theory}). 
\end{remark}
From $f^{\mathrm{eq}}/n_f$ being a density function, we find
\begin{align} \label{eq: boltz statistic equilibrium 7}
  \rho_{f^{\mathrm{eq}}} \stackrel{\eqref{eq: boltz statistic f mass density}}{=} m \int\limits_{\mathbb{R}^d} f^{\mathrm{eq}} (t,\bm{x},\bm{v})\,\mathrm{d}\bm{v}~= mn_f =\rho_f .
\end{align}
Thus
\begin{align} \label{eq: boltz statistic equilibrium 8}
  \bm{u}_{f^{\mathrm{eq}}}\stackrel{\eqref{eq: boltz statistic f velocity}}{=} \frac{1}{n_{f^{\mathrm{eq}}}} \int\limits_{\mathbb{R}^d} \bm{v} f^{\mathrm{eq}} (t,\bm{x},\bm{v})\,\mathrm{d}\bm{v}~= \bm{u}_f .
\end{align}
The covariance matrix of $f^{\mathrm{eq}}/n_f$ for a perfect gas \eqref{eq:ideal gas law}, verifies the conservation of pressure 
\begin{align} \label{eq: boltz statistic equilibrium 10}
  p_{f^{\mathrm{eq}}} & \stackrel{\eqref{eq: boltz statistic f pressure}}{=}  \frac{1}{d}m \int\limits_{\mathbb{R}^d} \left(\bm{v} - \bm{u}_{f^{\mathrm{eq}}}\right)^{2} f^{\mathrm{eq}} (t,\bm{x},\bm{v})\,\mathrm{d} \bm{v}  \nonumber\\
	& =  \frac{1}{d}m \int\limits_{\mathbb{R}^d} \left(\bm{v} - \bm{u}_{f}\right)^{2} f^{\mathrm{eq}} (t,\bm{x},\bm{v})\,\mathrm{d}\bm{v}  \nonumber\\
	& =  \frac{1}{d} m n_f \sum_{i=1}^{d} RT  \nonumber\\
	& = p_f. 
\end{align}
\begin{definition}
According to Bhatnagar, Gross and Krook (BGK) \cite{bhatnagar1954model} we can simplify the collision operator $J$ in \eqref{eq: boltz statistic equation 2} to  
\begin{align} \label{eq:collisionOperatorQ}
  Q(f) \coloneqq -\frac{1}{\tau}(f-M_{f}^{\mathrm{eq}}) \quad & \text{in } \mathfrak{R} ,  
\end{align}
where $\tau$ denotes the relaxation time between collisions, and \(M^{\mathrm{eq}}_{f} \coloneqq f^{\mathrm{eq}}(t,\bm{x},\bm{v})\) is a formal particular Maxwellian determined by the zeroth and first order moments of \(f\), \(n_{f}\) and \(\bm{u}_{f}\), respectively.
\end{definition}
\begin{remark}
The conservation of both, \(\rho_{f}\) and \(\bm{u}_{f}\), respectively \eqref{eq: boltz statistic equilibrium 7} and \eqref{eq: boltz statistic equilibrium 8}, is upheld, since \(\ln(M^{\mathrm{eq}}_{f})\) is a collision invariant of \(Q\) (cf. \cite[Theorem 1.5]{krause2010fluid}).
\end{remark}
\begin{definition}
With $Q$ from \eqref{eq:collisionOperatorQ} implanted in \eqref{eq: boltz statistic equation 2}, the BGK Boltzmann equation (BGKBE) reads
\begin{align}\label{eq:BGKBE}
  \underbrace{\left( \partial_t + \bm{v} \!\cdot\! \bm{\nabla}_{\bm{x}} + \frac{\bm{F}}{m} \!\cdot\! \bm{\nabla}_{\bm{x}}\right) }_{\eqqcolon~\frac{\mathrm{D}}{\mathrm{D}t}}  f = Q(f) \quad & \text{in } \mathfrak{R} , 
\end{align}
where \(\mathrm{D}/(\mathrm{D} t)\) is referred to as material derivative, and \( f (0, \cdot , \cdot  ) = f_{0} \) sets a suitable initial condition. 
\end{definition}
Here and below, the variable \(f\) is renamed to obey \eqref{eq:BGKBE} instead of \eqref{eq: boltz statistic equation 2}. 
\begin{remark}
The global existence of solutions to the BGKBE \eqref{eq:BGKBE} has been rigorously proven in \cite{perthame1989global}. 
Weighted \(L^{\infty}\) bounds and uniqueness have later been established on bounded domains \cite{perthame1993weighted} and in \(\mathbb{R}^{d}\) \cite{mischler1996uniqueness}.  
\end{remark}

\subsection{Diffusive limit}

We connect the BGKBE \eqref{eq:BGKBE} to the NSE \eqref{eq:incNSE} via diffusive limiting. 
To this end, a formal verification of the continuum balance equations \eqref{eq:incNSE} for the moments \(\rho_{f}\) and \(\bm{u}_{f}\) in Definition~\ref{def:moments} is conducted. 
Parts of the following derivation are taken from \cite{krause2010fluid}.
The limiting is done in three steps. 
As suggested in \cite{simonis2023pde}, we stress that neither the derivation, nor the references follow the aim of completeness, but rather are meant to illustrate the scale-bridging from the BGKBE toward the incompressible NSE only. 
The limiting is done in three steps (see e.g.\ \cite{krause2010fluid}). 

\subsubsection{Step 1: Mass conservation and momentum balance} 
Let $f^\star$ be a solution to the BGKBE \eqref{eq:BGKBE}.
Multiplying \(m\times\)\eqref{eq:BGKBE}  and integrating over $\Xi=\Rz^d$ yields
\begin{align} \label{eq:mass}
  \partial_t\rho_{f^\star} + \bm{\nabla}_{\bm{x}} \!\cdot\! \left( \rho_{f^\star} \bm{u}_{f^\star} \right) + \underbrace{\int\limits_{\Rz^d} \bm{F}\!\cdot\!\bm{\nabla}_{\bm{v}} f^\star\,\mathrm{d}\bm{v}}_{=0}   &= -\frac{1}{\tau}\underbrace{\left( \rho_{f^\star}-\rho_{M_{f^\star}^{\mathrm{eq}}}\right)}_{\overset{\eqref{eq: boltz statistic equilibrium 7}}{=}0} \\
\iff   
\partial_t\rho_{f^\star} + \bm{\nabla}_{\bm{x}} \!\cdot\! \left( \rho_{f^\star} \bm{u}_{f^\star} \right) &= 0  \quad \text{in }I\times\Omega,
\end{align}
where the force term nulls out (cf. \cite[Corollary 5.2]{krause2010fluid} with $g = 1$ and $a=F$ in the respective notation). 
Dividing by the constant $\rho_{f^\star}$, the conservation of mass in the NSE is verified. 
To balance momentum, we integrate \(m\bm{v} \times \)\eqref{eq:BGKBE} over $\Xi=\Rz^d$ and obtain that
\begin{align} \label{eq:momentum}
\partial_ t \left( \rho_{f^\star} \bm{u}_{f^\star} \right) + \bm{\nabla}_{\bm{x}} \!\cdot\! \mathbf{P}_{f^\star} + \left(\rho_{f^\star}\bm{u}_{f^\star} \!\cdot\! \bm{\nabla}_{\bm{x}}\right) \bm{u}_{f^\star} + \bm{F} &= 0 \quad \text{in } I \times \Omega .
\end{align}
The derivation of \eqref{eq:momentum} closely follows a standard procedure documented for example in \cite[Subsection 1.3.1]{krause2010fluid}. 
Finally, via \eqref{eq:momentum}\(/ \rho_{f^\star}\) a balance law of momentum in conservative form is recovered.  
Thus, when suitably defining and simplifying \(\mathbf{P}_{f^\star}\) according to the assumption of incompressible Newtonian flow, the incompressible NSE appears as the diffusive limiting system.

\subsubsection{Step 2: Incompressible limit}
The incompressible limit regime of the BGKBE \eqref{eq:BGKBE} is obtained via aligning parameters to the diffusion terms (see e.g.\ \cite{krause2010fluid}). 
Let $l_\mathrm{f}$ be the mean free path, $\overline{c}$ the mean absolute thermal velocity, and \(\nu>0\) a kinematic viscosity. 
Assuming that a characteristic length $L$ and a characteristic velocity $U$ are given, we define the Knudsen number, the Mach number and the Reynolds number, respectively 
\begin{align} 
\Kn & \coloneqq  \frac{ l_{\mathrm{f}} }{L} , \\
\Ma & \coloneqq \frac{U}{c_\mathrm{s}} , \\
\Rey & \coloneqq \frac{U L}{\nu}.
\end{align}
These non-dimensional numbers relate as 
\begin{align}
\Rey = \frac{l_{\mathrm{f}} c_\mathrm{s}}{\nu} \frac{\Ma}{\Kn} =\sqrt{\frac{24}{\pi}} \frac{\Ma}{\Kn} ,
\end{align}
via defining \(\nu \coloneqq \pi \overline{c} l_{\mathrm{f}} / 8\) and the isothermal speed of sound \(c_\mathrm{s} \coloneqq \sqrt{3 RT}\) (see also \cite{saint-raymond2003bgk} and references therein). 
\begin{definition}
    To link the mesoscopic distributions with the macroscopic continuum we inversely substitute \(c_{\mathrm{s}}\) with an artificial parameter \(\epsilon \in \Rzsp\) through 
\begin{align}
c_{\mathrm{s}} \mapsfrom \frac{1}{\epsilon} . 
\end{align}
Here, and in the following the symbol \(\mapsfrom\) denotes the assignment operator. 
\end{definition}
In the limit $\epsilon \searrow 0$, the incompressible continuum is reached, since $\Kn$ and $\Ma$ tend to zero while $\Rey$ remains constant \cite{saint-raymond2003bgk}. 
Based on that, we assign
\begin{align} \label{eq:c overline}
\overline{c}= \sqrt{\frac{8k_{\mathrm{B}} T}{m \pi}} \mapsfrom \sqrt{\frac{8}{3 \pi}} \frac{1}{\epsilon} .
\end{align}
Further, 
\begin{align} 
 l_{\mathrm{f}} \mapsfrom \sqrt{\frac{24}{\pi}} \nu \epsilon
\end{align}
and \eqref{eq:c overline} unfold the relaxation time  
\begin{align} 
  \tau = \frac{l_{\mathrm{f}}}{\overline{c}} \mapsfrom 3 \nu \epsilon^2 .
\end{align}
\begin{definition}
Consequently, we define the \(\epsilon\)-parametrized BGKBE \eqref{eq:BGKBE} as
\begin{align} \label{eq: lbm discrete velocity 3}
\frac{\mathrm{D}}{\mathrm{D} t} f = - \frac 1 {3\nu \epsilon^2} \left( f-M_{f}^{\mathrm{eq}} \right)  \quad \text{in } \mathfrak{R} , 
\end{align}
where the \(\epsilon\)-parametrized Maxwellian distribution evaluated at $(n_f,\bm{u}_f)$ is
\begin{align} \label{eq:MaxwellianParametrized}
M_{f}^{\mathrm{eq}} = \frac{n_f \epsilon^d}{\left( \frac 2 3 \pi \right)^{\frac{d}{2}}} \exp\left( - \frac{3}{2} \left( \bm{v} \epsilon - \bm{u}_f  \epsilon\right)^2\right) \quad \text{in } \mathfrak{R} .
\end{align}
\end{definition}
The BGKBE \eqref{eq: lbm discrete velocity 3} transforms to
\begin{align} \label{eq: boltz statistic limit 8}
 f & = M_{f}^{\mathrm{eq}} - 3\nu \epsilon^2 \matD f \quad \text{in } \mathfrak{R} .
\end{align}
Repeating \((\matDil)\)\eqref{eq: boltz statistic limit 8} gives
\begin{align} \label{eq: boltz statistic limit 9}
 \matD f = \matD M_{f}^{\mathrm{eq}} - 3 \nu \epsilon^2 \left( \matD \right)^{2} f  \quad \text{in } \mathfrak{R} .
\end{align}
The expression \eqref{eq: boltz statistic limit 9} substitutes $(\matDil) f$ in \eqref{eq: boltz statistic limit 8}. 
Thus
\begin{align} 
 f  =  M_{f}^{\mathrm{eq}} - 3 \nu \epsilon^2 \matD M_{f}^{\mathrm{eq}} + \left( 3\nu \epsilon^2 \matD \right)^{2} f   \quad \text{in } \mathfrak{R}.
\end{align}
Subsequent repetition produces higher order terms and substitutions. 
The appearing power series in $\epsilon$ around $t$ reads
\begin{align} \label{eq: boltz statistic limit 11}
 f  = \sum\limits_{i=0}^\infty  \left( -3\nu \epsilon^2 \matD \right)^i M_{f}^{\mathrm{eq}} \quad \text{in } \mathfrak{R} .
\end{align}
\begin{remark}
Up to lower order, equation \eqref{eq: boltz statistic limit 11} can also be obtained via Maxwell iteration \cite{zhao2017maxwell} and references therein. 
The derivation in \cite{zhao2017maxwell} is however based on an initial Taylor expansion of the material derivative, whereas the present formulation starts with repeated application of the material derivative. 
Further, similarities to classical Chapman--Enskog expansion (see e.g.\ \cite{wolf2000lattice} and references therein) are present. 
Comparisons of several expansion techniques for a discretized model based on the BGKBE can be found for example in \cite{caiazzo2009comparison}. 
\end{remark}

\subsubsection{Step 3: Newton's hypothesis}
For a solution $f^\star$ of \eqref{eq:BGKBE} the remaining stress tensor $\mathbf{P}_{f^\star}$ in \eqref{eq:momentum} has to fulfill Newton's hypothesis   
\begin{align}\label{eq:newtonsHyp}
\mathbf{P}_{f^\star} \overset{!}{=} - p_{f^\star} \mathbf{I}_{d} + 2 \nu \rho \mathbf{D}_{f^{\star}} + \mathcal{O}\left( \epsilon^{b}\right) \quad \text{in } I\times\Omega
\end{align}
up to order \(b>0\), where the rate of strain is denoted by
\begin{align}
\mathbf{D}_{f} = \frac{1}{2} \left[  \bm{\nabla}_{\bm{x}} \bm{u}_{f} +  \left( \bm{\nabla}_{\bm{x}} \bm{u}_{f} \right)^{\mathrm{T}} \right] .
\end{align}
\begin{proposition}
    With a cutoff at order \(b=2\) we formally obtain
\begin{align}
\mathbf{P}  = p \mathbf{I}_{d} - 2 \nu \rho\mathbf{D} .
\end{align}
\end{proposition}
\begin{proof}
    Equation \eqref{eq: boltz statistic limit 11}, provides an ansatz
\begin{align} \label{eq: boltz statistic limit 12}
 f ^\star = M_{f^\star}^{\mathrm{eq}} - 3\nu \epsilon^2 \matD M_{f^\star}^{\mathrm{eq}} \quad \text{in } \mathfrak{R} .
\end{align}
Due to the assumption that for $\epsilon \searrow 0$ higher order terms become sufficiently small, the order \(b\) is large enough. 
To validate \eqref{eq:newtonsHyp}, the stress tensor is computed by its definition \eqref{eq: boltz statistic f stress}. 
Unless stated otherwise, \(f\)-indices at moments are omitted below. 
First, the material derivative and \eqref{eq:mass} is used to obtain 
\begin{align}\label{eq: boltz statistic limit 13}
\matD M_{f}^{\mathrm{eq}} 
&= 
\left( 
 			\frac{1}{\rho} \matD \rho 
 			+ 3 \epsilon^2 \bm{c} \!\cdot\! \matD \bm{u} 
			- \frac{3 \epsilon^2}{m} \bm{c} \!\cdot\! \bm{F}  			
 			\right) M_{f}^{\mathrm{eq}}  \nonumber \\
&= 
\left[
			\frac{1}{\rho} \left( \partial_{t} + \bm{v}  \!\cdot\!  \bm{\nabla}_{\bm{x}} \right) \rho 
			+ 3 \epsilon^2 \bm{c}  \!\cdot\!   \left( \partial_{t} + \bm{v}  \!\cdot\!  \bm{\nabla}_{\bm{x}} \right) \bm{u} 
			- \frac{3 \epsilon^2 }{m} \bm{c}  \!\cdot\!  \bm{F}		
			\right] M_{f}^{\mathrm{eq}} \nonumber \\
&= 
\left[
			\frac{1}{\rho} \left( -\bm{u} \!\cdot\! \bm{\nabla}_{\bm{x}}\rho - \rho \bm{\nabla}_{\bm{x}} \!\cdot\! \bm{u}  + \bm{v}  \!\cdot\!  \bm{\nabla}_{\bm{x}} \rho \right) 
+ 3 \epsilon^2 \bm{c}  \!\cdot\!   \left( \partial_{t} + \bm{v}  \!\cdot\!  \bm{\nabla}_{\bm{x}} \right) \bm{u} 
			- \frac{3 \epsilon^2}{m} \bm{c}  \!\cdot\!  \bm{F} 			
  			\right] M_{f}^{\mathrm{eq}} \nonumber \\
&= \biggl[ 
			-\underbrace{\bm{\nabla}_{\bm{x}} \!\cdot\! \bm{u}}_{=:~a_f} 
			+ \smash{\underbrace{\frac{\bm{c}}{\rho}  \!\cdot\!  \bm{\nabla}_{\bm{x}} \rho}_{=:~b_f} 
			+ \underbrace{3 \epsilon^2 \bm{c} \!\cdot\! \partial_{t} \bm{u}}_{=:~c_f} 
			+ \underbrace{3 \epsilon^2 \bm{c} \!\cdot\! \left( \bm{v} \!\cdot\! \bm{\nabla}_{\bm{x}}\right) \bm{u}}_{=:~d_f}} 
			- \underbrace{\frac{3 \epsilon^2 \bm{c} }{m}  \!\cdot\!  \bm{F} }_{=:~e_{f}}		
			\biggr] M_{f}^{\mathrm{eq}} 
\end{align}
in $\mathfrak{R}$, where 
\begin{align}\label{eq:relVelo}
\bm{c} & \coloneqq \bm{v} - \bm{u}
\end{align}
defines the relative velocity. 
Plugging the derivative \eqref{eq: boltz statistic limit 13} into \eqref{eq: boltz statistic limit 12} gives
\begin{align} \label{eq: boltz statistic limit 14}
 f &= M_{f}^{\mathrm{eq}}\left[1 - 3 \epsilon^{2} \nu  \left(-a_f+b_f+c_f+d_f+e_f\right)\right] \quad \text{in } \mathfrak{R} .
\end{align}
Second, the velocity integrals of terms \(a_{f}, b_{f}, \ldots , e_{f}\) are individually evaluated. 
We use the symmetry of $M_{f}^{\mathrm{eq}}$ and that $M_{f}^{\mathrm{eq}}/n$ is a normal distribution with covariance $1/(3 \epsilon^2) \mathbf{I}_d$. 
In $I\times\Omega$ and for any $i,j,k,l\in \left\{1,2,...,d\right\}$ holds
\begin{align} 
m \int\limits_{\Rz^d} c_i c_j M_{f}^{\mathrm{eq}}\,\mathrm{d} \bm{v} 
 &= \frac{\rho}{3 \epsilon^2} \delta_{ij} , \label{eq:cc} \\
m \int\limits_{\Rz^d} c_i c_j c_k M_{f}^{\mathrm{eq}}\,\mathrm{d}\bm{v} 
 &= 0 , \label{eq:ccc} \\
m\int\limits_{\Rz^d} c_i c_j c_{k} v_l M_{f}^{\mathrm{eq}}\,\mathrm{d}\bm{v} 
 &= \frac{\rho}{9 \epsilon^4} \left( \delta_{ij}\delta_{kl} + \delta_{ik}\delta_{jl} + \delta_{il}\delta_{jk} \right)  .\label{eq:cccv}												
\end{align}
Hence, we obtain 
\begin{align}										     
 m \int\limits_{\Rz^d} c_i c_j a_f M_{f}^{\mathrm{eq}}\,\mathrm{d}\bm{v} 
 &= \left( m \int\limits_{\Rz^d} c_i c_j M_{f}^{\mathrm{eq}}\,\mathrm{d}\bm{v} \right) \partial_{x_k} u_k \stackrel{\eqref{eq:cc}}{=} \frac{\rho}{3 \epsilon^2} \partial_{x_k} \bm{u}_k  , \\
 m \int\limits_{\Rz^d} c_i c_j b_f M_{f}^{\mathrm{eq}}\,\mathrm{d}\bm{v} &= \left( m \int\limits_{\Rz^d} c_i c_j c_k M_{f}^{\mathrm{eq}}\,\mathrm{d}\bm{v} \right) \frac{1}{\rho}\partial_{x_k} \rho \stackrel{\eqref{eq:ccc}}{=} 0 , \\
 m \int\limits_{\Rz^d} c_i c_j c_f M_{f}^{\mathrm{eq}}\,\mathrm{d}\bm{v} &= \left( m \int\limits_{\Rz^d} c_i c_j c_k M_{f}^{\mathrm{eq}}\,\mathrm{d}\bm{v} \right) 3\epsilon^2 \partial_{t} u_k  \stackrel{\eqref{eq:ccc}}{=} 0 , \\									 
 m\int\limits_{\Rz^d} c_i c_j d_f M_{f}^{\mathrm{eq}}\,\mathrm{d}\bm{v} &= \left( m \int\limits_{\Rz^d} c_i c_j c_{k} v_l M_{f}^{\mathrm{eq}}\,\mathrm{d}\bm{v} \right) 3 \epsilon^2  \partial_{x_l} u_k  \nonumber \\
                         &\stackrel{\eqref{eq:cccv}}{=} 												\frac{\rho}{3 \epsilon^2} \left( \delta_{ij}\delta_{kl} + \delta_{ik}\delta_{jl} + \delta_{il}\delta_{jk} \right) \partial_{x_l} u_k  , \\					
 m\int\limits_{\Rz^d} c_i c_j e_f M_{f}^{\mathrm{eq}}\,\mathrm{d}\bm{v} & = \left( m \int\limits_{\mathbb{R}^{d}} c_i c_j c_k M_{f}^{\mathrm{eq}} \,\mathrm{d} \bm{v} \right) \frac{3\epsilon^2}{m} F_k \stackrel{\eqref{eq:ccc}}{=} 0 .
\end{align}
Third, for any $i,j \in \left\{1, 2, ..., d \right\}$, the component $P_{ij} \coloneqq ( \mathbf{P} )_{i,j}$ can be computed in \(\mathfrak{R} \). 
Reordering its terms, we achieve 
\begin{align} 
P_{ij} 
&=  m \int\limits_{\Rz^d} c_i c_j  \left[ 1- 3 \nu \epsilon^{2} \left(-a_f+b_f+c_f+d_f + e_{f} \right) \right] M_{f}^{\mathrm{eq}}\,\mathrm{d}\bm{v}   \nonumber\\
&= p \delta_{ij} 
 			-  3 \nu \epsilon^{2} \left[
			- \frac{\rho}{3 \epsilon^2} \partial_{x_k} u_k 
			+ \partial_{x_l} u_k \frac{\rho}{3 \epsilon^2} \left( \delta_{ij}\delta_{kl} + \delta_{ik}\delta_{jl} + \delta_{il}\delta_{jk} \right) 
		 \right]      \nonumber \\
        &= p \delta_{ij} + \nu \rho \left[ \delta_{ij} \partial_{x_k} u_k - \partial_{x_l} u_k \left( \delta_{ij}\delta_{kl} + \delta_{ik}\delta_{jl} + \delta_{il}\delta_{jk}\right) \right] \nonumber \\
        &= p \delta_{ij} -  \nu \rho \left( \partial_{x_{i}} u_j + \partial_{x_j} u_i \right),
\end{align}
which proves the claim. 
\end{proof}

\begin{remark}
Extending the formal result, the vanishing of higher order terms in the hydrodynamic limit \(\epsilon \searrow 0\) is rigorously proven in \cite[Notation: \(\varepsilon\) instead of \(\epsilon\)]{saint-raymond2003bgk} for the case $\Omega=\Rz^3$ and initial condition
\begin{align}
f_{\epsilon}(0,\bm{x},\bm{v}) = M \left( 1 + \epsilon g^{0}_{\epsilon}\left(\bm{x}, \bm{v}\right)\right)
\end{align}
close to an absolute equilibrium 
\begin{align}
M (\bm{v}) = \frac{1}{(2\pi)^{d/2}} \exp\left( -\frac{|\bm{v}|^2}{2}\right) 
\end{align}
with \(\bm{u}_{f} = \bm{0}\),  and \(\rho_{f} = 1 = RT\) and initial fluctuations \(g_{\epsilon}^{0}\). 
There, solutions \(f_{\epsilon}\) to the \(\epsilon\)-scaled BGKBE are passed to the limit, where the corresponding velocity moments \(\bm{u}_{f_{\epsilon}}\) are consequently identified as limiting to Leray's weak solutions \cite{leray1934sur} of the incompressible NSE \cite[Theorem 1.2]{saint-raymond2003bgk}. 
It is to be noted that we adapted the initial wording "hydrodynamic limit" from \cite{saint-raymond2003bgk} toward the more generic term "diffusive limit" to underline the presence of diffusion terms in the limiting equation. 
\end{remark}
For the sake of clarity, we recall the main result of Saint-Raymond \cite{saint-raymond2003bgk} in the present notation. 
It is to be stressed that the present work neglects the additional temperature equation appearing in the limit via imposing an ideal gas.  
Let the Hilbert space \(L^{2}(\mathbb{R}^{d}, M \,\mathrm{d} \bm{v})\) be defined by the scalar product 
\begin{align}
\left( f, g\right) \mapsto \int\limits_{\mathbb{R}^{d}} f\left( \bm{v} \right) g\left(\bm{v} \right) M \left( \bm{v} \right) \,\mathrm{d} \bm{v}, 
\end{align}
where \(M\,\mathrm{d}\bm{v}\) is a positive unit measure on \(\mathbb{R}^{d}\) which allows the definition of an average 
\begin{align}
\langle \xi \rangle \coloneqq \int\limits_{\mathbb{R}^{d}} \xi\left(\bm{v}\right) M\left( \bm{v} \right) \,\mathrm{d} \bm{v}
\end{align}
over \(\Xi = \mathbb{R}\) for any integrable function \(\xi\).  
\begin{definition}
For any pair \((f,g)\) of functions which are measurable and almost everywhere non-negative on \(\mathfrak{P}\), define the relative entropy
\begin{align}
H \left( f \mid g\right) \coloneqq \int\limits_{\mathbb{R}^{d}}\int\limits_{\mathbb{R}^{d}} \left( f \log \left( \frac{f}{g}\right) - f + g \right) \,\mathrm{d}\bm{v}\,\mathrm{d}\bm{x} \geq 0 
\end{align}
\end{definition}
Based on that, we recall the following preparative result from \cite{saint-raymond2002discrete}. 
Note that locally integrable functions on \(X \subseteq \mathbb{R}^{d}\) and Sobolev spaces based on \(L^{2}\) are denoted with 
\begin{align}
L^{1}_{\mathrm{loc}}\left(X\right) &= \left\{ f\colon X \to \mathbb{R} \; \left\vert \;  \forall \bm{x} \in X \, \exists r>0 \colon \, B_{r}(\bm{x}) \subseteq X  \, \wedge \, f\vert_{B_{r}(\bm{x})} \in L^{1}(B_{r}(\bm{x})) \right.\right\} , \\
H^{k}(X) = W^{k,2}(X) & = \left\{ f \in L^{2}(X)  \; \left\vert \; \forall \vert \alpha\vert \leq k \, \exists \text{ weak derivative }\partial^{\alpha} f \in L^{2}(X)  \right. \right\},  
\end{align}
respectively. 
The latter are Hilbert spaces and the dual of \(H^{k}(X)\) is denoted with \(H^{-k}(X)\). 
\begin{theorem}
Let \(\epsilon>0\) and \(0 \leq f_{\epsilon}^{0} \in L_{\mathrm{loc}}^{1} ( \mathfrak{P}) \) such that the entropy is bounded \(H (f_{\epsilon}^{0} \mid M ) < \infty\). 
Then \(\exists\) \(f_{\epsilon}\) a global, non-negative, and weak solution of \eqref{eq: lbm discrete velocity 3} which fulfills 
\begin{align}
f_{\epsilon} - M \in C\left( \mathbb{R}_{>0}, L^{2} \left( \mathfrak{P} \right) + L^{1}\left( \mathfrak{P} \right) \right) 
\end{align}
and \(\forall t > 0\):
\begin{align}
H \left( f_{\epsilon} (t) \mid M \right) + \frac{1}{\epsilon^{2}\nu}\int\limits_{0}^{t}\int\limits_{\mathbb{R}^{d}}\int\limits_{\mathbb{R}^{d}} D\left( f_{\epsilon} \right) \left(s\right) \,\mathrm{d}\bm{v}\,\mathrm{d}\bm{x}\,\mathrm{d}s \leq H\left( f_{\epsilon}^{0} \mid M \right) ,
\end{align}
where the dissipation \(D ( f_{\epsilon} ) \) is defined by 
\begin{align}
D\left( f_{\epsilon}\right) = \left( M_{f_{\epsilon}}  - f_{\epsilon} \right) \log \left( \frac{Mf_{\epsilon}}{f_{\epsilon}}\right) \geq 0 .
\end{align}
Additionally, the weak solution fulfills the typical moment integrated conservation laws \eqref{eq: boltz statistic equilibrium 7}, \eqref{eq: boltz statistic equilibrium 8}, and \eqref{eq: boltz statistic equilibrium 10} at zeroth, first, and second order, respectively. 
\end{theorem}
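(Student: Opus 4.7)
The plan is to build $f_{h}$ as a weak limit of an approximating sequence obtained by freezing the Maxwellian, then to close the argument with an entropy–dissipation inequality that both delivers the quantitative bound claimed in the theorem and furnishes the compactness needed to pass to the limit. This is the strategy pioneered by Perthame \cite{perthame1989global} for the BGK equation and adapted by Saint-Raymond \cite{saint-raymond2002discrete,saint-raymond2003bgk} to the $h$-scaled setting with a prescribed absolute equilibrium $M$.

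\textbf{Approximation scheme.} First, I would define a fixed-point iteration $f_{h}^{(n)}$ in which the nonlinearity is decoupled: given $f_{h}^{(n)}$, compute its moments $(n_{f_{h}^{(n)}}, \bm{u}_{f_{h}^{(n)}})$, form the Maxwellian $M_{f_{h}^{(n)}}^{\mathrm{eq}}$, and let $f_{h}^{(n+1)}$ solve the linear transport–relaxation problem
\begin{align*}
\frac{\mathrm{D}}{\mathrm{D} t} f_{h}^{(n+1)} + \frac{1}{3\nu h^{2}} f_{h}^{(n+1)} = \frac{1}{3\nu h^{2}} M_{f_{h}^{(n)}}^{\mathrm{eq}}, \qquad f_{h}^{(n+1)}(0,\cdot,\cdot) = f_{h}^{0}.
\end{align*}
This can be solved along characteristics by Duhamel's formula, and a mild truncation/mollification of $M^{\mathrm{eq}}$ guarantees both well-posedness and non-negativity. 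Local-in-time contraction in a suitable weighted $L^{1}$ space yields a sequence whose limit is a non-negative mild solution of \eqref{eq:lbmDiscreteVelocity3} on a small time interval.

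\textbf{A priori entropy–dissipation bound.} The central step is to multiply \eqref{eq:lbmDiscreteVelocity3} by $\log(f_{h}/M)$, integrate over $\mathfrak{P}$, and exploit the transport identity for the material derivative (which produces a perfect divergence after integration) together with the elementary inequality $(b-a)(\log a - \log b) \leq 0$. This formally yields
\begin{align*}
\frac{\mathrm{d}}{\mathrm{d} t} H\!\left(f_{h}(t) \mid M\right) + \frac{1}{h^{2}\nu} \int_{\Omega}\int_{\mathbb{R}^{d}} D(f_{h})(t)\,\mathrm{d}\bm{v}\,\mathrm{d}\bm{x} \leq 0,
\end{align*}
with $D(f_{h}) = (M_{f_{h}}-f_{h})\log(M_{f_{h}}/f_{h}) \geq 0$. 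Integrating in time gives the claimed inequality, and the bound $H(f_{h}^{0}\mid M) < \infty$ propagates uniformly. This bound also extends the local solution globally, since the entropy controls the moments through classical inequalities of Csiszár–Kullback–Pinsker type.

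\textbf{Compactness and passage to the limit.} The entropy bound together with the dissipation term $D(f_{h})$ provides equi-integrability of the approximating sequence in $L^{1}$, so the Dunford–Pettis theorem yields weak compactness. Strong compactness of the velocity moments, required to handle the nonlinear dependence of $M_{f_{h}}$ on $(n_{f_{h}}, \bm{u}_{f_{h}})$, follows from velocity-averaging lemmas applied to the transport operator $\partial_{t}+\bm{v}\cdot\bm{\nabla}_{\bm{x}}$. Lower semicontinuity of $H(\cdot\mid M)$ and convexity of $D$ then let one pass to the limit in the entropy inequality. Continuity in time with values in $L^{2}(\mathfrak{P})+L^{1}(\mathfrak{P})$ is recovered by splitting $f_{h}-M$ into its bounded part (in $L^{2}$, controlled by the quadratic part of the entropy near $M$) and its tail (in $L^{1}$, controlled by the linear part). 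The conservation identities \eqref{eq:boltzStatisticEquilibrium7}, \eqref{eq:boltzStatisticEquilibrium8}, \eqref{eq:boltzStatisticEquilibrium10} are obtained by testing against the collision invariants $1$, $\bm{v}$, $|\bm{v}|^{2}$ and using that these annihilate the BGK operator.

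\textbf{Main obstacle.} The decisive difficulty is the nonlinear, non-local dependence of $M_{f_{h}}$ on $f_{h}$ through its first two moments: weak $L^{1}$ convergence of $f_{h}$ does not suffice to pass to the limit in the exponential defining $M^{\mathrm{eq}}$. Overcoming this requires coupling the entropy–dissipation estimate with a velocity-averaging argument to promote the moment convergence to strong $L^{1}_{\mathrm{loc}}$, and then controlling the Maxwellian through continuity of the map $(n,\bm{u})\mapsto M^{\mathrm{eq}}$ on the set where moments remain bounded, a set which is itself stabilized precisely by the entropy bound.
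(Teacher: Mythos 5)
You should first be aware that the paper does not prove this theorem at all: it is explicitly \emph{recalled} as a preparative result from Saint-Raymond \cite{saint-raymond2002discrete} (with the underlying global existence going back to Perthame \cite{perthame1989global}), so there is no in-paper proof to compare against. Judged on its own terms, your sketch reproduces the standard architecture of those references \textendash{} frozen-Maxwellian iteration solved by Duhamel along characteristics, the H-theorem computation (the transport term integrating to a divergence, the collision term reduced to the dissipation \(D(f_h)\ge 0\) after subtracting the collision-invariant part \(\log(M_{f_h}/M)\), which is legitimate here because \(f_h\) and \(M_{f_h}\) share zeroth and first moments and the reference \(M\) carries the same fixed temperature), Dunford\textendash Pettis plus velocity averaging for the limit passage, and testing against \(1,\bm{v},|\bm{v}|^2\) for the conservation laws. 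That is the correct strategy, and your identification of the nonlinear, nonlocal Maxwellian as the decisive obstacle is exactly right.

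Two places in your sketch are thinner than the argument actually requires. First, the local contraction "in a suitable weighted \(L^1\) space" is not routine: the map \(f\mapsto M_f^{\mathrm{eq}}\) is not Lipschitz near vacuum, since \(\bm{u}_f\) involves division by \(n_f\); the "mild truncation/mollification" you invoke is precisely where Perthame's work is delicate, and in his proof the Maxwellian is kept well defined not by the entropy bound but by dispersive (free-transport) estimates giving pointwise control of the macroscopic fields. Second, and relatedly, your claim that the entropy bound "extends the local solution globally" via Csisz\'ar\textendash Kullback\textendash Pinsker overstates what \(H(f_h\mid M)<\infty\) buys: it controls \(\int f_h(1+|\bm{v}|^2+|\log f_h|)\) and hence equi-integrability, but it does not by itself prevent concentration or vacuum of the local density, which is what must be excluded to continue the iteration. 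If you patch those two points (dispersive moment bounds for globalization and well-posedness of the iteration; entropy only for compactness and for the inequality itself), the proposal matches the cited proofs. Note also the harmless discrepancy that the formal H-theorem for \eqref{eq:lbmDiscreteVelocity3} produces the prefactor \(1/(3\nu h^2)\) rather than the \(1/(h^2\nu)\) appearing in the statement; this is an inconsistency of the paper's transcription, not of your argument.
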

\begin{proof}
This theorem has been stated and proven in \cite{saint-raymond2002discrete}.
\end{proof}
Via a boundedness assumption in \(L^{2}( \mathfrak{P}, \,\mathrm{d}\bm{x}M\,\mathrm{d}\bm{v} ) \) of initial sequential fluctuation data \((g_{\epsilon}^{0})\) defined from 
\begin{align}
g_{\epsilon}^{0} = \frac{1}{\epsilon} \left( \frac{f_{\epsilon}^{0}}{M}- 1 \right) ,
\end{align}
a constantly prefactored entropy bound \(C_{0} \epsilon^{2}\) is obtained and in turn weak compactness on \((Mg_{\epsilon})\) holds in \(L^{1}_{\mathrm{loc}} ( \mathbb{R}_{>0}\times \mathbb{R}^{d}, L^{1}(\mathbb{R}^{d}))\) \cite{bardos1993fluid,saint-raymond2003bgk}. 
Within this setting, for \(1\geq p \geq \infty \) define 
\begin{align}
w-L^{p} \coloneqq \begin{cases} 
\text{weak topology } \sigma \left( L^{p}, L^{p^{\prime}} \right) \quad & \text{if } p<\infty , \\
\text{weak-\(^\ast\) topology } \sigma \left( L^{\infty}, L^{1} \right)  \quad & \text{if } p=\infty ,
\end{cases}
\end{align}
to restate the following main result of \cite{saint-raymond2003bgk}.
\begin{theorem}
Let \((g_{\epsilon}^{0})\) be a family of measurable functions on \(\mathfrak{P}\) which satisfy 
\begin{align}
1 + \epsilon g_ {\epsilon}^{0} & \geq 0 \text{ almost everywhere}, 	\\
H\left( M\left( 1 + \epsilon g_{\epsilon}^{0} \right) \mid M \right) & \leq C_{0} \epsilon^{2}, 
\end{align}
and with \(\bm{\nabla}_{\bm{x}} \cdot \bm{u}_{0} = 0 \) additionally fulfill that 
\begin{align}
\left\langle g_{\epsilon}^{0} \bm{v} \right\rangle & \xrightharpoonup[w - L^{2}\left( \mathbb{R}^{d}\right)]{\epsilon\searrow 0} \bm{u}_{0} 
\end{align}
Further, let \(f_{\epsilon} = M( 1+ \epsilon g_{\epsilon} )\) be a solution of \eqref{eq: lbm discrete velocity 3}. 
Then 
\begin{align}
\exists \rho , \bm{u} \in L^{\infty} \left( \mathbb{R}_{>0}, L^{2}\left( \mathbb{R}^{d} \right) \right) \cap L^{2}\left( \mathbb{R}_{>0}, H^{1}\left( \mathbb{R}^{d} \right) \right)
\end{align}
we have weak convergence such that, modulo a zero-limiting subsequence \((\epsilon_{n})\), 
\begin{align}
\left\langle g_{\epsilon} \right\rangle & \xrightharpoonup[w - L_{\mathrm{loc}}^{1}\left( \mathbb{R}_{>0} \times \mathbb{R}^{d} \right)]{\epsilon\searrow 0} \rho, \\
\left\langle g_{\epsilon} \bm{v} \right\rangle & \xrightharpoonup[w - L_{\mathrm{loc}}^{1}\left( \mathbb{R}_{>0} \times \mathbb{R}^{d} \right)]{\epsilon\searrow 0} \bm{u}	.
\end{align}
In addition, \( \rho\) and \(\bm{u}\) are weak solutions of the incompressible NSE \eqref{eq:incNSE}, where the pressure is determined from the solenoidal condition. 
\end{theorem}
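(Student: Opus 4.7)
My plan would be to follow the modulated entropy / moment method pioneered in this area. The preparative theorem already hands us a uniform entropy bound of the form $H(f_h(t)\mid M)\leq C_0 h^2$ together with a dissipation estimate $\tfrac{1}{h^2\nu}\int_0^t\!\iint D(f_h)\,\mathrm{d}\bm{v}\,\mathrm{d}\bm{x}\,\mathrm{d}s\leq C_0 h^2$. I would first convert these into quantitative bounds on $g_h=(f_h/M-1)/h$ via the elementary inequality $H(M(1+hg)\mid M)\gtrsim h^2\langle g^2\rangle$ for bounded fluctuations and a truncation argument for large ones, obtaining uniform boundedness of $(g_h)$ in $L^\infty_t L^2_{\bm{x},M}$ and of the nonequilibrium part $(g_h-\Pi g_h)/h$ in $L^2_{t,\bm{x},M}$, where $\Pi$ is the orthogonal projector in $L^2(M\mathrm{d}\bm{v})$ onto the collision invariants $\{1,\bm{v},|\bm{v}|^2\}$. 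From these bounds standard weak-compactness yields (along a subsequence $h_n\searrow 0$) limits $\rho=\lim\langle g_h\rangle$ and $\bm{u}=\lim\langle g_h\bm{v}\rangle$ in the claimed spaces.

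\textbf{Moment identification and incompressibility.} Next I would integrate the parametrized BGKBE \eqref{eq:lbmDiscreteVelocity3} against $1$ and $\bm{v}$ to get local conservation laws for $\langle g_h\rangle$ and $\langle g_h\bm{v}\rangle$, as is done formally in \eqref{eq:mass}--\eqref{eq:momentum}. The rescaling by $1/h$ forces the leading-order flux terms $h^{-1}\bm{\nabla}_{\bm{x}}\cdot\langle g_h\bm{v}\rangle$ and $h^{-1}\bm{\nabla}_{\bm{x}}\cdot\langle g_h\bm{v}\otimes\bm{v}\rangle$ to be controlled only in a distributional sense; passing $h\searrow 0$ in the continuity equation and using the fact that the singular terms must balance yields the solenoidal constraint $\bm{\nabla}_{\bm{x}}\cdot\bm{u}=0$, together with the Boussinesq relation tying $\rho$ to temperature fluctuations (trivialized here by the isothermal/ideal-gas hypothesis). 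The diffusion coefficient $\nu$ appearing in the limit will come from the second-order moment $\langle\bm{v}\otimes\bm{v}\,(g_h-\Pi g_h)/h\rangle$, which by the BGK structure equals (up to $\mathcal{O}(h)$) the action of the pseudo-inverse of the linearized collision operator $L=-1/(3\nu h^2)(\mathrm{Id}-\Pi)$ on $\bm{v}\cdot\bm{\nabla}_{\bm{x}}\Pi g_h$; this reproduces exactly the $-2\nu\rho\mathbf{D}$ contribution computed in the formal Chapman--Enskog step \eqref{eq:boltzStatisticLimit14}.

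\textbf{The nonlinear convection term.} The main obstacle, as in every fluid limit of kinetic equations, is passing to the limit in the quadratic flux $\langle g_h\bm{v}\otimes\bm{v}\rangle$, which generates the convective term $\bm{\nabla}_{\bm{x}}\cdot(\bm{u}\otimes\bm{u})$. Weak convergence alone is insufficient; one needs strong compactness of $\mathbf{P}\langle g_h\bm{v}\rangle$ (with $\mathbf{P}$ the Leray projector) in $L^2_{t,\bm{x}}$. Here I would invoke the velocity-averaging lemmas of DiPerna--Lions/Golse--Saint-Raymond to upgrade weak to strong compactness of $\mathbf{P}\bm{u}_{f_h}$, while the gradient part of $\bm{u}_{f_h}$ carries fast acoustic oscillations that must be filtered out. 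The acoustic waves are controlled by showing that the curl-free part of the momentum satisfies a wave-type equation with speed $1/h$, whose energy can be decoupled from the incompressible motion via an explicit group $\mathcal{U}(t/h)$; the remainder after filtering is small in the modulated relative entropy. Combining strong compactness of the solenoidal piece with the smallness of the acoustic piece allows passage to the limit in the tensor product.

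\textbf{Conclusion.} Once the convection term is identified, the limiting momentum equation reads $\partial_t\bm{u}+\bm{\nabla}_{\bm{x}}\cdot(\bm{u}\otimes\bm{u})-\nu\bm{\Delta}\bm{u}=-\bm{\nabla}_{\bm{x}}\Pi$ in the sense of distributions on divergence-free test fields, where the pressure $\Pi$ is recovered a posteriori by the Leray--Helmholtz decomposition as the unique (up to constants) scalar such that the right-hand side is a gradient; this gives the claimed weak solution in the sense of Leray. The expected hard point, and the one that genuinely required Saint-Raymond's analysis, is the acoustic filtering and the accompanying velocity-averaging argument — everything else (entropy bounds, moment identification, viscosity coefficient, pressure recovery) is essentially bookkeeping on top of the formal computation already carried out in Steps~1--3 preceding the statement.
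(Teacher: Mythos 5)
The paper does not prove this theorem at all: it is an external result, restated in the paper's notation from Saint-Raymond's 2003 article \cite{saint-raymond2003bgk} (building on \cite{bardos1993fluid,saint-raymond2002discrete}), and the authors only use it as a black box to anchor the diffusion limit $\mathcal{F}\rightharpoonup\mathrm{NSE}$. Your roadmap is, in outline, the strategy of that cited proof \textendash{} entropy and dissipation bounds on the fluctuations, weak compactness of the moments, the singular flux terms forcing incompressibility and the Boussinesq relation, the pseudo-inverse of the linearized BGK operator producing $-2\nu\rho\mathbf{D}$, and velocity averaging plus acoustic filtering for the convection term \textendash{} so there is no divergence of approach to report, only the caveat that what you have written is a plan rather than a proof.

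Two places where the plan is more optimistic than the actual argument deserve flagging. First, the bound $H(M(1+hg_h)\mid M)\leq C_0h^2$ does \emph{not} directly yield $g_h$ bounded in $L^\infty_t L^2(M\,\mathrm{d}\bm{v})$: the relative entropy behaves like $h^2\langle g_h^2\rangle$ only where $hg_h$ is small, and for large fluctuations it controls merely an $L\log L$ quantity. Saint-Raymond (following Bardos\textendash Golse\textendash Levermore) therefore works with renormalized fluctuations and a flat\textendash sharp decomposition; your parenthetical ``truncation argument for large ones'' is where a substantial part of the technical work actually lives. Second, and specific to BGK as opposed to Boltzmann: the relaxation term involves the local Maxwellian $M_{f_h}$ built nonlinearly from the moments of $f_h$, which are themselves controlled only weakly; establishing integrability of $M_{f_h}$ and of its high-velocity tails uniformly in $h$, so that the dissipation bound can be exploited at all, is the distinctive difficulty of \cite{saint-raymond2003bgk} and does not appear in your sketch. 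Everything else (conservation laws, identification of $\nu$, recovery of the pressure by the Leray projection) is, as you say, bookkeeping on top of the formal computation in Steps 1\textendash 3 of the paper.
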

\begin{proof}
An extended version of this theorem (considering nonuniform temperature) has been stated and proven in \cite{saint-raymond2003bgk}.
\end{proof}

Anticipating the discretization of velocity, space, and time, we explicitly state the \(\epsilon\)-parametrized continuous BGKBE in terms of a family of differential equations. 
\begin{definition}
With  \(\epsilon^{2}\times\)\eqref{eq: lbm discrete velocity 3}, we construct the family of BGKBEs
\begin{align} \label{eq: lbm discrete velocity 4}
 \mathcal{F}:= \left( \epsilon^2 \matD f^{\epsilon} + \frac 1 {3\nu} \left( f^{\epsilon}-M_{f^{\epsilon}}^{\mathrm{eq}} \right)=0 \quad\text{in } \mathfrak{R} \right)_{\epsilon>0} , 
\end{align}
where the conservable \(f^{\epsilon}\) in \eqref{eq: lbm discrete velocity 4} now displays an upper index \(\epsilon\) to stress the dependence on the artificial parametrization. 
\end{definition}

\section{Numerical methodology}\label{sec:limitConsistency}

\subsection{Limit consistency}

In general, the term consistency is often used to describe a limiting approach for discrete equations to a single continuous one. 
Since, in the present framework we instead aim to verify conformity of families of equations in terms of a weak solution limits under discretization (see Figure~\ref{fig:limitConsAbs}), we adjust the definition of consistency below. 
The necessity of doing so has been initially motivated in \cite[Definition 2.1.]{krause2010fluid}. 
Further supporting this redefinition, we stress the modular character of the notion of limit consistency. 
Based on the here proposed methodology, we enable the consistency analysis of discretized kinetic equations in terms of limiting towards a target PDE with a previously imposed parametrization or scaling. 
In the present context, we use the novel methodology to obtain a limit consistent, intrinsically parallel discrete evolution equation that forms the centerpiece of many LBMs.

\begin{figure}[ht!]
\centering
\includegraphics[scale=1]{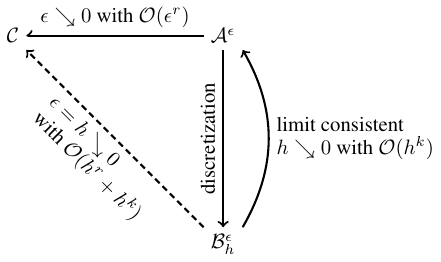}
\caption{Schematic concept of limit consistency.  
The targeted PDE of is denoted with \(\mathcal{C}\), the kinetic equation or relaxation system is denoted with \(\mathcal{A}^{\epsilon}\), the kinetic scheme or relaxation scheme is denoted with \(\mathcal{B}^{\epsilon}_{h}\). 
The diagonal limit \(\mathcal{B}_{h}^{\epsilon} \dashrightarrow \mathcal{C}\) for \(\epsilon = h \searrow 0\) is presently focused.}
\label{fig:limitConsAbs}
\end{figure}

\begin{remark}
Compared to previous approaches which regard the space-time and velocity discretizations by separate methods, the distinct feature of the present methodology is thus manifested in its generic modularity. 
Hence, we enable to use thermodynamic information only if necessary to approximate the model PDE. 
Notably, in \cite{simonis2023pde} limit consistency is successfully used for analyzing both, convergence of discretized, mathematically abstract relaxation systems without thermodynamic information as well as limit information of discretized, BGKBE-based models. 
\end{remark}

\begin{figure}[ht!]
\centering
\includegraphics[scale=1]{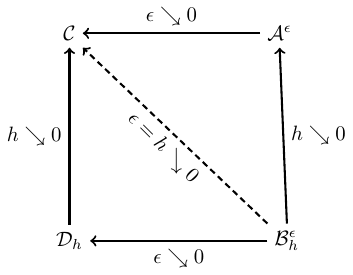}
\caption{Schematic concept of coupled kinetic (relaxation) and discretization limits.
The targeted PDE of is denoted with \(\mathcal{C}\), the kinetic equation or relaxation system is denoted with \(\mathcal{A}^{\epsilon}\), the kinetic scheme or relaxation scheme is denoted with \(\mathcal{B}^{\epsilon}_{h}\) and the corresponding macroscopic scheme or relaxed scheme is \(\mathcal{D}_{h}\).
Here, \(h\) defines a velocity or space-time discretization.
The diagonal limit \(\mathcal{B}_{h}^{\epsilon} \dashrightarrow \mathcal{C}\) for \(\epsilon = h \searrow 0\) is presently focused.}
\label{fig:asymptoticPreserving}
\end{figure}
    
\begin{remark}
In order to underline the novelty of the present approach, we compare the idea to established ones with the help of Figure~\ref{fig:asymptoticPreserving}. 
The illustration is based on a given relaxation or kinetic limit \(\mathcal{A}^{\epsilon} \to \mathcal{C}\) for \( \epsilon \searrow 0 \) from a relaxation system or kinetic equation \(\mathcal{A}^{\epsilon}\) to its targeted PDE \(\mathcal{C}\). 
For example, the well-established property of asymptotic preserving defines whether a stable and consistent space-time discretization \(\mathcal{D}_{h}\) exists in the macroscopic limit \(\epsilon \searrow 0 \) of a space-time discretized relaxation scheme \(\mathcal{B}^{\epsilon}_{h}\) \cite{guo2023unified,junk2001finite,jin2012asymptotic} with discretization parameter \(h>0\). 
If this is the case, the formal limit equality of equations
\begin{align}\label{eq:formalLimitEquAP}
\lim\limits_{h\searrow 0} \underbrace{ \left( \lim\limits_{\epsilon \searrow 0} \mathcal{B}_{h}^{\epsilon} \right)}_{=\mathcal{D}_{h}} 
\stackrel{!}{=} \lim\limits_{\epsilon\searrow 0} \underbrace{\left( \lim\limits_{h\searrow 0} \mathcal{B}_{h}^{\epsilon} \right) }_{=\mathcal{A}^{\epsilon}} = \mathcal{C} 
\end{align}
should hold. 
In contrast to that, here the parameter \(\epsilon\) is glued to the grid, i.e.\ \(\epsilon \mapsfrom h\), such that, in the context of LBMs we work with the formal limit equality 
\begin{align}\label{eq:formalLimitEquLC}
\lim\limits_{\substack{\epsilon = h \\ h \searrow 0}} \mathcal{B}_{h}^{\epsilon}  
\stackrel{!}{=} \lim\limits_{\epsilon\searrow 0} \underbrace{\left( \lim\limits_{h\searrow 0} \mathcal{B}_{h}^{\epsilon} \right) }_{=\mathcal{A}^{\epsilon}} = \mathcal{C} , 
\end{align}
rather than with \eqref{eq:formalLimitEquAP}. 
Drawing the analogy to limits and continuity of multivariate functions \cite{folland2001advanced} (via \(\mathcal{B}^{\epsilon}_{h} = \mathcal{B} (\epsilon, h)\)), we expect that other shapes of paths than \(\epsilon = h\) can be used. 
In fact, the mapping function \(h \mapsfrom \mathcal{O}(\epsilon^{\alpha_{0}})\) is analyzed in \cite{guo2023unified}. 
The overall order \(\alpha_{0}\) is the minimum of exponents from space \(\triangle x\) and time \(\triangle t\) discretization in the order of \(\epsilon\), respectively, and leads to distinct features of the scheme \(\mathcal{B}^{\epsilon}_{h}\). 
Presently, we focus on the order at which the limit point is approximated by the diagonal path (\(\epsilon = h\)). 
Notably, hybrid schemes have been derived e.g.\ by Klar~\cite{klar1999relaxation}, which are based on a discrete velocity Boltzmann equation but completed with an asymptotic preserving discretization to achieve uniform functionality for all ranges in \(\epsilon\). 
\end{remark}

\begin{definition} \label{def:familyPDEs}
Let \(d \in \mathbb{N}\) and $X \subseteq \mathbb{R}^d$ with a discretization $X_h \subseteq X$ for any $h\in\mathbb{R}_{>0}$. 
Let $U(X)$ and $W_h(X_h)$ denote Hilbert spaces on \(X\) and \(X_{h}\), respectively, where $W_h$ contains the grid functions of \(\{v_{h}\colon X_{h} \to \mathbb{R}\} \). 
Via  
\begin{align}
\mathcal{A}^{\epsilon} & = \left( A^{\epsilon} \left( \cdot \right)=0 \quad \text{in } U\right)_{\epsilon>0}, \label{eq:familyAeps} \\
\mathcal{B}_{h}^{\epsilon} &= \left( B_{h}^{\epsilon} \left( \cdot \right)=0 \quad \text{in } W_h\right)_{\epsilon>0, h>0},  \label{eq:familyBepsh}
\end{align}
families of PDEs are defined by continuous and discrete operators \(A^{\epsilon}\) and \(B_{h}^{\epsilon}\), respectively. 
Solutions to instances of \eqref{eq:familyAeps} and \eqref{eq:familyBepsh} are denoted with \(a^{\epsilon} \in U\) for all \(\epsilon >0 \) and \(b_{h}^{\epsilon} \in W_{h}\) for all \(\epsilon,h>0\), respectively. 
\end{definition}
Conforming to Definition~\ref{def:familyPDEs}, we occasionally adopt the notation 
\begin{align}
 \mathcal{C} = \left( C \left( \cdot \right) = 0 \quad \text{in } \tilde{U} \right)  \label{eq:familyC} 
\end{align}
for a single PDE with a solution \(c\) in the Hilbert space $\tilde{U}(X)$. 
\begin{definition}\label{def:PDEandSOLconv}
Let \(\mathcal{A}^{\epsilon}\) and \(\mathcal{C}\) be given as in \eqref{eq:familyAeps} and \eqref{eq:familyC}, respectively. 
The abstracted solution limit of a solution \(a^{\epsilon}\) to \(\mathcal{A}^{\epsilon}\) toward a solution \(c\) to \(\mathcal{C}\) for \(\epsilon \searrow 0\) is denoted with 
\begin{align}
a^{\epsilon} \rightharpoonup c
\end{align}
and defines convergence in the broadest sense (e.g.\ formal, weak or strong). 
The formal order \(r\geq 0\) of this convergence is \(A^{\epsilon} ( c ) \in \mathcal{O}(\epsilon^{r}) \). 
The information of both, the formal PDE convergence and the solution convergence is compressed in the notation 
\begin{align}
\mathcal{A}^{\epsilon} \xrightharpoonup[\mathcal{O}(\epsilon^{r})]{\epsilon\searrow 0} \mathcal{C}  . 
\end{align} 
\end{definition}
Based on the abstracted but formally determined background limit in Definition~\ref{def:PDEandSOLconv}, we propose the following specialized notion of consistency. 
\begin{definition}\label{def:limitConsistency}
Let \(\mathcal{A}^{\epsilon}\) admit an abstracted solution limit of order \(\mathcal{O}(\epsilon^{r})\) in \(\epsilon \searrow 0\) to a solution \(c\) of a PDE system \(\mathcal{C}\) as in Definition~\ref{def:PDEandSOLconv}. 
Then, $\mathcal{B}^{\epsilon}_{h}$ is called limit consistent of order $k>0$ to $\mathcal{A}^{\epsilon}$ in $W_{h}(X_{h})$, if for any fixed \(\epsilon>0\) holds that
\begin{enumerate}[label=(\roman*)]
\item \(B_{h}^{\epsilon} ( a^{\epsilon} |_{X_{h}} ) \in \mathcal{O} \left(h^k \right)\) in \(W_h\) ,  and 
\item \(k\geq r\) .
\end{enumerate}
The residual expression \(B_{h}^{\epsilon} ( a^{\epsilon} |_{X_{h}} )\) is called truncation error.  
\end{definition}
\begin{lemma}\label{pro:limSup}
Let $\mathcal{B}_{h}^{\epsilon}$ be limit consistent of order $k$ to $\mathcal{A}^{\epsilon}$ in $W_{h} (X_{h})$. 
Then for any fixed \(\epsilon >0\) we have 
\begin{align}
\left[ B_{h}^{\epsilon} \left( a^{\epsilon} \mid_{X_{h}} \right) \in \mathcal{O} \left(h^k \right) \quad\text{in } W_h \right] 
\iff 
\lim\limits_{h\searrow 0} \sup\limits_{x \in X_h} \left\vert \frac {B_{h}^{\epsilon} \left(a^{\epsilon} |_{X_{h}}\right) ( x ) }{ h^{k} } \right\vert  < \infty .
\end{align}
\end{lemma}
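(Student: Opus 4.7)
The plan is to prove this equivalence as a direct unpacking of the $\mathcal{O}(h^k)$ notation on $W_h(X_h)$. Because \cref{sec: lbm discrete 1} specifies that the order bound is evaluated element-wise on the grid, the statement $B_h(a^h|_{X_h}) \in \mathcal{O}(h^k)$ in $W_h$ means precisely that there exist constants $C > 0$ and $h_0 > 0$ such that
\begin{align}
\left| B_h\left(a^h \mid_{X_h}\right)(\bm{x}) \right| \leq C\, h^k \qquad \forall\, h \in (0,h_0),\ \forall\, \bm{x} \in X_h.
\end{align}
Once this reformulation is in place, both implications follow algebraically from the positivity of $h^k$.

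For the forward direction I would take the uniform bound above, divide through by $h^k > 0$, and pass to the supremum over $\bm{x} \in X_h$, which yields $\sup_{\bm{x} \in X_h} |B_h(a^h|_{X_h})(\bm{x})/h^k| \leq C$ for every $h \in (0,h_0)$. The right-hand side is independent of $h$, so passing to $h \searrow 0$ (as a limit, or as a limit superior in the event the monotonic behavior fails) keeps the quantity bounded by $C$, hence finite.

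For the reverse direction I would start from the assumed finiteness $L \coloneqq \lim_{h\searrow 0} \sup_{\bm{x} \in X_h} |B_h(a^h|_{X_h})(\bm{x})/h^k| < \infty$. By the definition of the limit, fixing any $\varepsilon > 0$ produces an $h_0 > 0$ such that for all $h \in (0,h_0)$ the supremum is bounded by $L + \varepsilon$. Multiplying back through by $h^k > 0$ and choosing $C \coloneqq L + \varepsilon$ reproduces the uniform elementwise estimate, which is exactly the condition $B_h(a^h|_{X_h}) \in \mathcal{O}(h^k)$ in $W_h$.

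The only delicate point — and barely an obstacle — is reconciling the use of $\lim$ in the statement with the Landau symbol, which only encodes an \emph{eventual} uniform bound; the reverse implication really only needs the limit superior to be finite, and the proof above goes through verbatim under that weaker hypothesis. In the forward direction, one similarly obtains finiteness of the limit superior, and if additional regularity (e.g.\ eventual monotonicity of the renormalized supremum in $h$) is available then the genuine limit exists and coincides with it. Beyond this bookkeeping, no additional structure of $U(X)$ or $W_h(X_h)$ beyond the pointwise evaluation at grid nodes is required.
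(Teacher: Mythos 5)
Your proof is correct, and it takes a genuinely different (and in some respects tighter) route than the paper's. The paper proves only the left-to-right implication, and does so by positing an explicit leading-order expansion $B_h\left(a^h\mid_{X_h}\right) = K h^{k} + \mathcal{O}\left(h^{k+1}\right)$ with a finite constant $K$, then dividing by $h^{k}$ and reading off $K + \mathcal{O}(1) < \infty$ in the supremum norm; this implicitly assumes more than membership in $\mathcal{O}(h^{k})$, namely that the truncation error admits a well-defined $h^{k}$-coefficient plus a higher-order remainder. You instead unpack the Landau symbol directly as an eventual uniform elementwise bound $\left\vert B_h\left(a^h\mid_{X_h}\right)(\bm{x})\right\vert \leq C h^{k}$, which requires no such expansion and lets you run the argument in both directions by dividing and multiplying by $h^{k}$ and passing to the supremum. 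You also correctly flag the one genuine imprecision in the statement: the forward direction only delivers finiteness of the limit superior, not existence of the limit, unless extra regularity is assumed; the paper glosses over this by baking the existence of the limit into its assumed expansion. What the paper's approach buys is a concrete identification of the limit value with the leading truncation constant $K$ (which is what one wants for the subsequent interpretation as a local truncation error); what yours buys is a complete two-sided proof from the bare definition, valid without assuming an asymptotic expansion exists. Both are sound; yours is the more elementary and more general argument.
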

\begin{proof}
We interpret the operation 
\begin{align}
\cdot\vert_{X_{h}} : U \to W_{h}, f \mapsto f\vert_{X_{h}} 
\end{align}
as an interpolation which is exact at the grid nodes of \(X_{h}\). 
Let \(\epsilon >0\) be fixed. 
Forming the local truncation error of \(B_{h}^{\epsilon}\) with respect to \(A^{\epsilon}\), via insertion of the exact solution \(a^{\epsilon}\) evaluated at the grid nodes \cite{leveque2007finite}, gives 
\begin{align}
B_{h}^{\epsilon} \left( a^{\epsilon}\vert_{X_{h}} \right) = K h^{k} + \mathcal{O}\left( h^{k+1}\right), 
\end{align}
with a constant \(K<\infty\). 
Due to consistency, i.e.\ the local truncation nulling out for \(h\searrow 0\), we can limit 
\begin{align}
\lim\limits_{h\searrow 0} \left\| B_{h}^{\epsilon} \left( a^{\epsilon}\vert_{X_{h}} \right) \right\| = 0 ,
\end{align}
where \(\left\| \cdot \right\| \coloneqq \sup\limits_{x \in X_{h}} \left\vert \cdot \right\vert \) defines a supremum norm on \(W_{h}\).
Similarly, we have that 
\begin{align}
\lim\limits_{h\searrow 0} \left\| \frac{ B_{h}^{\epsilon} \left( a^{\epsilon}\vert_{X_{h}} \right)}{h^{k}}  \right\| = \lim\limits_{h\searrow 0} \left\| \frac{ K h^{k} + \mathcal{O}\left( h^{k+1}\right) }{h^{k}}  \right\| = K + \mathcal{O} ( 1 ) < \infty .  
\end{align}
\end{proof}
\begin{remark}
It is to be stressed, that the difference to classical consistency is with respect to the exact solution \(a^{\epsilon}\) being already parametrized in \(\epsilon\). 
Via the assignment of the artificial parameter \(\epsilon \mapsfrom h\) and the interpolation \(a^{\epsilon\mapsfrom h}\vert_{X_h}\) onto the grid nodes, the kinetic/relaxation parametrization is irreversibly coupled to the discretization. 
The process of discretization has thus to be consistent to or at least uphold this limit. 
If this is the case, the limit consistency implies classical consistency with concatenated orders.
\end{remark}

\begin{remark}
Note that, in Definition~\ref{def:limitConsistency}, we have purposely not specified the limit \(\mathcal{A}^{\epsilon} \xrightharpoonup[]{\epsilon\searrow 0} \mathcal{C} \) further. 
Dependent on the situation at hand, this limit can be e.g.\ weak or strong. 
For example, the former is the case when approximating weak solutions of the incompressible NSE~\eqref{eq:incNSE} with the BGKBE~\eqref{eq:BGKBE}~\cite{saint-raymond2003bgk} in diffusive scaling. 
The latter is given when using a relaxation system (or the corresponding BGK model \cite{simonis2020relaxation}), for the approximation of scalar, linear, \(d\)-dimensional ADE \cite{simonis2022constructing}. 
The limit can also be in terms of unique entropy solutions, if \(\bm{F}\) is nonlinear~\cite{bouchut2000diffusive}. 
\end{remark}

\begin{remark}
By Lemma \ref{pro:limSup}, we have identified $B_{h}^{\epsilon}(a^{\epsilon}|_{X_h})$ as the abstracted local truncation error 
\begin{align}
- T^{\epsilon}_{h} \coloneqq \underbrace{ B_{h}^{\epsilon} \left(b_{h}^{\epsilon}\right) }_{= 0 } - B_{h}^{\epsilon}\left(a^{\epsilon}\vert_{X_{h}}\right) 
\end{align}
(e.g.\ see \cite{leveque2007finite,bartels2015numerical}) with an additional relaxation limit running in the background. 
As a consequence, demanding stability seems natural to complete the convergence result. 
\end{remark}

Let the global error be defined by\phantomsection\label{sym:globalErrorRS} 
\begin{align}
E^{\epsilon}_{h} = b_{h}^{\epsilon} - a^{\epsilon}\vert_{X_{h}} . 
\end{align}
We cut off the Taylor expansion of \(B_{h}^{\epsilon} (b_{h}^{\epsilon})\) at \(a^{\epsilon}\vert_{X_{h}}\) given by 
\begin{align}
B_{h}^{\epsilon}(b_{h}^{\epsilon}) = \sum\limits_{n=0}^{\infty} \frac{1}{n!} \left( \partial_{b_{h}^{\epsilon}}\right)^{n} B_{h}^{\epsilon} \left( a^{\epsilon} \vert_{X_h} \right) \left(E_{h}^{\epsilon}\right) ^{n}
\end{align}
to obtain a linearized expression 
\begin{align}
J_{B_{h}^{\epsilon}} \left( a^{\epsilon} \vert_{X_{h}} \right) E_{h}^{\epsilon} = - T_{h}^{\epsilon} + \mathcal{O} \left( \| E_{h}^{\epsilon} \|^{2} \right)  ,
\end{align}
where \(J_{B_{h}^{\epsilon}}(a^{\epsilon}\vert_{X_{h}})\) denotes the Jacobian of the discrete operator \(B_{h}^{\epsilon}\) at exact solutions \(a^{\epsilon}\) of \(A^{\epsilon}\) evaluated on the grid. 
The nonlinear terms are gathered in \(\mathcal{O}(\| E_{h}^{\epsilon}\|^{2}) \). 
Following \cite{leveque2007finite} we can define the a notion of stability with respect to the linearized discretization.  

\begin{definition}\label{def:stabilityLeveque}
For fixed \(\epsilon\), the linearized discrete operator \(B_{h}^{\epsilon}\) is stable in some norm \(\|\cdot\|_{W_{h}}\) on \(W_{h}\) if its inverse Jacobian at the exact solution evaluated at \(X_{h}\) is uniformly bounded for \(h\searrow 0\) in the sense that there exist constants \(K>0\) and \(h_{0}\) such that 
\begin{align}
\left\|  \left( J_{B_{h}^{\epsilon}}\left( a^{\epsilon} \vert_{X_{h}} \right)\right)^{-1} \right\| \leq K  \quad \text{for all } h < h_{0} .
\end{align}
\end{definition}

\begin{remark}
In the context of the LBM, where \(B_{h}^{\epsilon}\) is the space-time-velocity discrete lattice Boltzmann equation (LBE) for \(\epsilon \mapsfrom h\), several previous works derived bounds for linearized amplification matrices in the sense of von Neumann (e.g.\ \cite{simonis2020relaxation}) and proved weighted \(L^2\)-stability \cite{junk2009weighted} for linearized collisions which admit a stability structure. 
As a matter of fact, nonlinear stability estimates for LBMs naturally involve the notion of entropy in a mathematical (relaxation) \cite{caetano2019result} or thermodynamical (dynamical system) \cite{boghosian2001entropic} sense. 
\end{remark}

\begin{remark}
Upon condition that the lattice Boltzmann discretizations are stable in some norm, limit consistency can be used to infer classical consistency and hence convergence \cite{leveque2007finite,tadmor2012review} toward the target PDE.  
Below, the overall notion of convergence is to be understood in terms of the kind of relaxation or kinetic background limit \(\mathcal{A}^{\epsilon} \xrightharpoonup[]{\epsilon\searrow 0} \mathcal{C} \) only (e.g.\ formal, weak or strong). 
\end{remark}

\begin{lemma}\label{lem:weakConvWithLC}
Let \(\mathcal{A}^{\epsilon}_{h}\) and \(\mathcal{B}_{h}^{\epsilon}\) be given as in Definition~\ref{def:limitConsistency} and let 
\begin{enumerate}[label=(\roman*)]
\item \(\mathcal{B}_{h}^{\epsilon}\) be limit consistent of order \(k\) to \(\mathcal{A}^{\epsilon}\), 
\item \(B_{h}^{\epsilon}\) be stable and linear in the terms of Definition~\ref{def:stabilityLeveque}. 
\end{enumerate}
Then we obtain an overall convergence result of solutions in the sense of 
\begin{align}
\mathcal{B}_{h}^{\epsilon} \xrightarrow[\mathcal{O}\left(\epsilon^{r}\right)|_{X_{h}} + \mathcal{O} \left( \epsilon^{k}\right)]{(\epsilon,h) \searrow (0,0)} \mathcal{C} 
\equiv 
\left( \mathcal{A}^{\epsilon} \xrightharpoonup[\mathcal{O}(\epsilon^{r})]{\epsilon \searrow 0} \mathcal{C} \right)
\circ 
\left(\mathcal{B}_{h}^{\epsilon} \xrightarrow[\mathcal{O}\left(h^{k}\right)]{h\searrow 0} \mathcal{A}^{\epsilon} \right) , 
\end{align}
where the symbol \(\equiv\) denotes arrow equality irrespective of the nature of the mappings. 
Further, if \(\epsilon = \iota(h)\) via \(\iota = \mathrm{id}\),  \(b^{\epsilon}_{h}\) converges at order \(r\) to \(c\).  
\end{lemma}
\begin{proof}
For fixed \(\epsilon > 0\), limit consistency of \(\mathcal{B}_{h}^{\epsilon}\) and the stability of \(B_{h}^{\epsilon}\) imply classically \cite{lax1956survey} that
\begin{align}
\left\| E_{h}^{\epsilon} \right\| 
& = 
\left\| - \left( J_{B_{h}^{\epsilon}} \left( a^{\epsilon}|_{X_{h}} \right) \right)^{-1} T_{h}^{\epsilon} + \mathcal{O}\left( \left\| E_{h}^{\epsilon} \right\|^{2} \right) \right\|\nonumber \\
& \leq 
\left\|  \left( J_{B_{h}^{\epsilon}} \left( a^{\epsilon}|_{X_{h}} \right) \right)^{-1} \right\| \left\| T_{h}^{\epsilon} \right\|\nonumber \\
& \leq K  \mathcal{O}\left( h^{k}\right) 
\end{align}
and hence
\begin{align}\label{eq:discreteSol}
b_{h}^{\epsilon} = a^{\epsilon}|_{X_{h}} + \mathcal{O}\left( h^{k} \right).
\end{align}
Similarly, from the background (relaxation or kinetic) limit, we have that 
\begin{align}\label{eq:relaxSol}
a^{\epsilon} = c + \mathcal{O}\left( \epsilon^{r} \right) . 
\end{align}
Combining \eqref{eq:discreteSol} and \eqref{eq:relaxSol} we obtain 
\begin{align} \label{eq:exactSol}
b_{h}^{\epsilon} = c|_{X_{h}} + \mathcal{O}\left( \epsilon^{r}\right)|_{X_{h}} +\mathcal{O}\left( h^{k} \right).
\end{align}
Now let \(\epsilon \mapsfrom \iota(h)\).\phantomsection\label{sym:assignEpsilon} 
Thus, since \(\mathcal{O}(\epsilon^{r})|_{X_{h}}\) are higher order terms interpolated on the grid nodes, we see that their leading order in \(h\) is \(r\). 
Conclusively, \eqref{eq:exactSol} becomes   
\begin{align}
b_{h}^{\epsilon} & = c|_{X_{h}} + \mathcal{O} \left( \iota (h)^{r} + h^{k} \right) \nonumber\\
& = c|_{X_{h}} + \mathcal{O}\left( h^{\mathrm{min}\left\{r,k\right\}} \right) \nonumber
\\
& = c|_{X_{h}} + \mathcal{O}\left( h^{r} \right) ,
\end{align}
due to the limit consistent discretization. 
\end{proof}

\begin{remark}
Having established the definition of limit consistency, we utilize it below to provide limit consistent discretizations of families of BGK Boltzmann equations, as suggested in Figure~\ref{fig:limitCons}. 
In particular, we elaborate the previous work of Krause \cite{krause2010fluid} via reconstructing the discretizations and validating the procedure in terms of limit consistency. 
\end{remark}

\begin{figure}[ht!]
\centering
\includegraphics[scale=1]{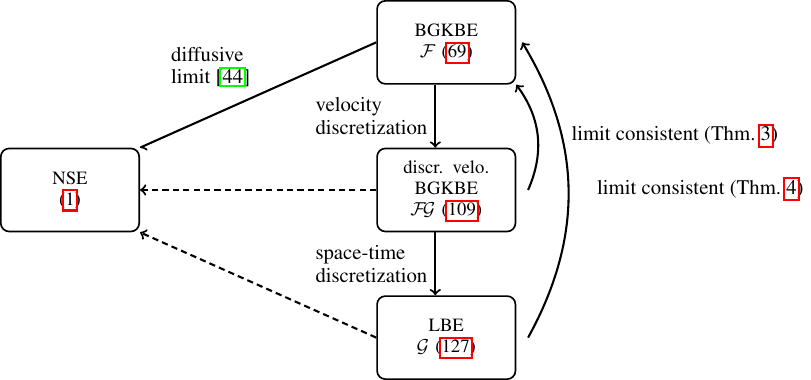}
\caption{Illustration of limit consistency for the present BGKBE discretizations.}
\label{fig:limitCons}
\end{figure}

\subsection{Discretization of velocity}

Classically, the discrete velocity BGKBE is a result of reducing the velocity space \(\Xi\) of the BGKBE~\eqref{eq:BGKBE} to a countable finite set. 
Let $\epsilon\in\Rzsp$ and $\nu$ be fixed, and define the set
\begin{align}
Q= \left\{ \bm{v}_{i} \mid i=0,1,\ldots, q-1\right\} \subseteq \Xi = \mathbb{R}^{d} 
\end{align}
which is countable and finite with \(q \coloneqq \# Q < \infty\). 
Below, we regularly denote \(Q\) as \(DdQq\) instead.
\begin{figure}[ht!]
\centering
 \includegraphics[scale=1]{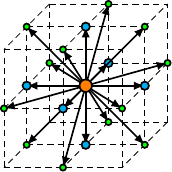}
\hspace{2em}
\includegraphics[scale=1]{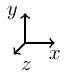}
  \caption{The \(D3Q19\) discrete velocity set.
Coloring refers to energy shells: orange, cyan, green denote zeroth, first, second order, respectively.
  }
  \label{fig:D3Q19}
\end{figure}
Unless stated otherwise, in the present work we use \( Q = D3Q19\) which is depicted in Figure~\ref{fig:D3Q19} and defined as follows. 
\begin{definition}
The \(D3Q19\) stencil is defined through its elements  
\begin{align}
\mathbb{R}^{3} \ni \bm{v}_{i} = \frac{1}{\epsilon} \begin{cases} 
\left( 0, 0, 0 \right)^{\mathrm{T}} & \quad \text{if } i=0, \\
\left( \pm 1, 0 , 0 \right)^{\mathrm{T}}, \left( 0 , \pm 1 , 0 \right)^{\mathrm{T}}, \left( 0, 0 , \pm 1 \right)^{\mathrm{T}}  & \quad \text{if } i=1,2,\ldots,6 , \\
\left( 0, \pm 1 , \pm 1 \right)^{\mathrm{T}}, \left( \pm 1, \pm 1 , 0 \right)^{\mathrm{T}}, \left( \pm 1, 0 , \pm 1 \right)^{\mathrm{T}} & \quad \text{if } i=7, 8,  \ldots, 18  ,
\end{cases} 
\end{align} 
distributed on three energy shells \cite{karlin1999perfect}.
\end{definition}

In the following, we assume that a solution $f^\epsilon$ to \eqref{eq: lbm discrete velocity 3} exists and is integrable to well-defined hydrodynamic moments $n_{f^\epsilon}$ and $\bm{u}_{f^h}$. 
The dependence of \(f^{\epsilon}\) on \(\epsilon\) transfers to its moments, such that $n_{f^\epsilon}, \bm{u}_{f^\epsilon} \in \mathcal{O}(1)$ is not assumed, but holds by construction instead. 
Taylor expanding $M_{f^\epsilon}^{\mathrm{eq}} ( t, \bm{x}, \bm{v}_{i} )$ in $\epsilon$ such that in $I\times\Omega\times Q$ a separation at order two yields 
\begin{align}\label{eq:discVeloEquilibrium}
 M_{f^{\epsilon}}^{\mathrm{eq}} ( t, \bm{x}, \bm{v}_{i} )
  &= 
  \frac{n_{f^\epsilon} \epsilon^d}{\left( \frac{2}{3} \pi \right)^{\frac{d}{2}}} \exp\left( -\frac{3}{2} \widetilde{\bm{v}}_i^2 \right) \exp\left( 3 \epsilon \widetilde{\bm{v}}_i \!\cdot\! \bm{u}_{f^\epsilon} - \frac{3}{2} \epsilon^2 \bm{u}^2_{f^\epsilon} \right) \nonumber \\
  &= 
  \underbrace{\frac{n_{f^\epsilon} \epsilon^d}{ \left( \frac{2}{3} \pi \right)^{\frac{d}{2}}} \exp\left(- \frac{3}{2}{\widetilde{\bm{v}}_i}^2\right)
  \left[ 1 + 3 \epsilon {\widetilde{\bm{v}}_i} \!\cdot\! \bm{u}_{f^\epsilon} - \frac{3}{2} \epsilon^2 \bm{u}^2_{{f^\epsilon}} + \frac 9 2 \epsilon^2 \left( {\widetilde{\bm{v}}_i} \!\cdot\! \bm{u}_{f^\epsilon} \right)^2\right]}_{\eqqcolon~\widetilde{M}_{f^\epsilon}^{\mathrm{eq}}} \\
  &\quad + 
  R^{(0)}_{t,\bm{x},\bm{v}_i}, \nonumber
\end{align}
where $R^{(0)}_{t,\bm{x},\bm{v}_i} \in \mathcal{O}(\epsilon^{d+3})$ defines the a remainder term for any $(t,\bm{x},\bm{v}_i) \in I\times\Omega\times Q$. 
Where unambiguous, we drop the corresponding indexes below. 
Note that the prefactorization ${\widetilde{\bm{v}}_i} := \epsilon\bm{v}_i $ removes the $\epsilon$-dependence.
To uphold conservation properties inherited by the evaluated Maxwellian distribution $M_{f^\epsilon}^{\mathrm{eq}}$ we introduce the weights $w_i\in\Rzsp$ for $i=0,1,...,q-1$ such that in \(I\times \Omega\) 
\begin{align}
 n_{f^{\epsilon}}  
 & = 
  \sum\limits_{i=0}^{q-1} w_i \widetilde{M}_{f^\epsilon}^{\mathrm{eq}}, \\
 n_{f^{\epsilon}} \bm{u}_{f^{\epsilon}} 
 & = 
 \sum\limits_{i=0}^{q-1} w_i \bm{v}_i \widetilde{M}_{f^\epsilon}^{\mathrm{eq}} .
\end{align}
Using Gauss--Hermite quadrature the weights $w_i$ for $D3Q19$ are deduced as 
\begin{align}\label{eq:weights}
w_i = w \begin{cases} 
    \frac{1}{3}  & \quad \text{if } i=0, \\
    \frac{1}{18}  & \quad \text{if } i=1,2,\ldots, 6 ,\\
	\frac{1}{36} & \quad \text{if } i=7,8,\ldots, 18 ,
	\end{cases}
\end{align}
where  
\begin{align}\label{eq:weightFunction}
w \left( \tilde{\bm{v}}_{i} \right) \coloneqq \left( \frac 2 3 \pi \right) ^{\frac{d}{2}} \epsilon^{-d} \exp\left(\frac{3}{2}{\widetilde{\bm{v}}_i}^2\right).
\end{align}
\begin{lemma}\label{lem:discVeloMomentApprox}
With the above, the integral moments of $f^\epsilon$ are approximated in $I\times\Omega$ by sums over the discrete velocities. 
In particular, for zeroth and first order this gives
\begin{align} 
   n_{f^\epsilon} - \sum\limits_{i=0}^{q-1} w_i f^\epsilon  &  \in \mathcal{O}\left(\epsilon^2 \right), \label{eq: lbm discrete velocity 9} \\
   \bm{u}_{f^\epsilon} - \frac{\sum\limits_{i=0}^{q-1} w_i \bm{v}_i {f^\epsilon}}{\sum\limits_{i=0}^{q-1} w_i f^\epsilon} &\in \mathcal{O}\left(\epsilon \right) .\label{eq: lbm discrete velocity 9a}
\end{align}
\end{lemma}

\begin{proof}
In \(I\times \Omega\) we can classify  
\begin{align} 
 \int\limits_{\Rz^d} f^\epsilon \,\mathrm{d}\bm{v} = n_{f^\epsilon} = \sum_{i=0}^{q-1} w_i \widetilde{M}_{f^\epsilon}^{\mathrm{eq}}
                                     &= \sum_{i=0}^{q-1} w_i M_{f^\epsilon}^{\mathrm{eq}} + R^{(1)}_{t,\bm{x}} \nonumber \\
                                     &= \sum_{i=0}^{q-1} w_i \left( f^\epsilon + 3\nu \epsilon^2 \matD f^\epsilon \right) + R^{(1)}_{t,\bm{x}} \nonumber \\
                                     &= \sum_{i=0}^{q-1} w_i f^\epsilon + R^{(2)}_{t,\bm{x}} 
\end{align}
and
\begin{align} 
 \int\limits_{\Rz^d} \bm{v} f^\epsilon \,\mathrm{d}\bm{v} = n_{f^\epsilon} \bm{u}_{f^\epsilon} 
                                     = \sum_{i=0}^{q-1} w_i \bm{v}_i \widetilde{M}_{f^\epsilon}^{\mathrm{eq}}                   &= \sum_{i=0}^{q-1} w_i \bm{v}_i M_{f^\epsilon}^{\mathrm{eq}} + R^{(3)}_{t,\bm{x}} \nonumber \\
                                     &= \sum_{i=0}^{q-1} w_i \bm{v}_i \left(f^\epsilon + 3\nu \epsilon^2 \matD f^\epsilon \right) + R^{(3)}_{t,\bm{x}} \nonumber \\
                                                          &= \sum_{i=0}^{q-1} \left(w_i \bm{v}_i f^\epsilon \right) + R^{(4)}_{t,\bm{x}} ,
\end{align}
respectively.
We categorize the remainder terms \(R_{\cdot, \cdot,\cdot}^{(\cdot)}\) as follows. 
For all $i=0,1,...,q-1$, the weight $w_i$ defined in \eqref{eq:weights} is seen as a function of $\epsilon$ in the sense of \eqref{eq:weightFunction} such that $w_i \in \mathcal{O}(\epsilon^{-d})$. 
Thus, the product $w_i(\matDil) f^\epsilon(t,\bm{x},\bm{v}_i)$ is also a function of $\epsilon$ and hence in $\mathcal{O}(1)$ for all $(t,\bm{x},\bm{v}_i) \in I\times\Omega\times Q$. 
Further, recall from \eqref{eq:discVeloEquilibrium} that $M_{f^\epsilon}^{\mathrm{eq}}$ is approximated by $\widetilde{M}_{f^\epsilon}^{\mathrm{eq}}$ with an error in $\mathcal{O}(\epsilon^{3+d})$ for all $(t,\bm{x},\bm{v}_i) \in I\times\Omega\times Q$. 
Henceforth, with $\bm{v}_i \in \mathcal{O}(\epsilon^{-1})$ by construction, we obtain for all $(t,\bm{x}) \in I\times\Omega$ that
\begin{itemize}
\item $R^{(1)}_{t,\bm{x}} \in \mathcal{O}(\epsilon^3)$,  
\item $R^{(2)}_{t,\bm{x}} \in \mathcal{O}(\epsilon^2)$,  
\item $R^{(3)}_{t,\bm{x}} \in \mathcal{O}(\epsilon^2)$, and  
\item $R^{(4)}_{t,\bm{x}} \in \mathcal{O}(\epsilon)$ 
\end{itemize}
which completes the proof.
\end{proof}
\begin{definition}\label{def:discVeloQuant}
For $i=0,1,...,q-1$ and in $I\times\Omega$ we define
\begin{align}
   f_i^\epsilon (t, \bm{x})      &\coloneqq  w_i f^\epsilon \left(t,\bm{x},\bm{v}_i\right) , \label{eq:discreteVeloPDF}\\
   n_{\bm{f}^{\epsilon}}(t,\bm{x})  &\coloneqq \sum_{i=0}^{q-1} f_i^\epsilon(t,\bm{x}) , \\
   \bm{u}_{\bm{f}^{\epsilon}}(t,\bm{x})  &\coloneqq \frac{1}{ n_{\bm{f}^{\epsilon}}(t,\bm{x})} \sum_{i=0}^{q-1} \bm{v}_i f_i^\epsilon(t,\bm{x}) , \\
   \overline{M}_{\bm{f}^{\epsilon},i}^{\mathrm{eq}}(t,\bm{x})   &\coloneqq \left\{ \frac{w_i}{w} { n_{\bm{f}^{\epsilon}}} \left[ 1 +   3 \epsilon^2 \bm{v}_i\!\cdot\! \bm{u}_{\bm{f}^{\epsilon}} - \frac{3}{2} \epsilon^2 \bm{u}_{\bm{f}^{\epsilon}}^2 + \frac 9 2 \epsilon^4 \left(\bm{v}_i\!\cdot\! \bm{u}_{\bm{f}^{\epsilon}}\right)^2\right]\right\} (t,\bm{x}). \label{eq:discreteVeloMax}
\end{align}
\end{definition}
\begin{definition}
Multiplication $w_i \times$\eqref{eq: lbm discrete velocity 3} and injection of Definition~\ref{def:discVeloQuant}, we obtain the discrete velocity BGKBE as a system of \(q\) equations 
\begin{align}\label{eq: lbm discrete velocity 10}
 \matD f_i^\epsilon &= - \frac 1 {3\nu \epsilon^2} \left( f_i^\epsilon-\overline{M}_{\bm{f}^{\epsilon},i}^{\mathrm{eq}} \right) \quad \text{in }I\times\Omega , 
\end{align}
for \(i=0,1,\ldots, q-1\). 
The upheld parametrization with \(\epsilon\) generates the family of discrete velocity BGKBE 
\begin{align} \label{eq: lbm discrete velocity 13}
\mathcal{F\!G} := \left( \matD f_i^\epsilon + \frac 1 {3\nu \epsilon^2} \left( f_i^\epsilon-\overline{M}_{\bm{f}^{\epsilon},i}^{\mathrm{eq}} \right) = 0 \quad\text{in } I\times\Omega\times Q \right)_{\epsilon>0} .
\end{align}
\end{definition}
\begin{theorem} \label{theo: lbm discrete velocity}
Suppose that for given $\epsilon,\nu \in\Rzsp$, $f^\epsilon$ is a weak solution of the BGKBE \eqref{eq: lbm discrete velocity 3} with well-defined integral moments $n_{f^\epsilon}$ and $\bm{u}_{f^\epsilon}$ and that for $n_{f^\epsilon}$, $\bm{u}_{f^\epsilon}$, $w_i\matD f^\epsilon$ understood as functions of $\epsilon$ holds
\begin{align} \label{eq: lbm discrete velocity 11} 
 n_{f^\epsilon}         &\in \mathcal{O}(1) \quad \text{in }I\times\Omega , \\ 
 \bm{u}_{f^\epsilon}    &\in \mathcal{O}(1) \quad \text{in }I\times\Omega , \\ 
 w_i \matD f^\epsilon   &\in \mathcal{O}(1) \quad \text{in }I\times\Omega\times Q . 
\end{align}
Then, the family $\mathcal{F\!G}$ of discrete velocity BGKBEs~\eqref{eq: lbm discrete velocity 13} is limit consistent of order two to the family $\mathcal{F}$ of BGKBEs~\eqref{eq: lbm discrete velocity 4} in \(I \times \Omega \times Q\). 
\end{theorem}
\begin{proof}
As a direct consequence of measuring the remainder terms which gives \eqref{eq: lbm discrete velocity 9} and \eqref{eq: lbm discrete velocity 9a} in Lemma~\ref{lem:discVeloMomentApprox}, we obtain the overall truncation error
\begin{align} \label{eq: lbm discrete velocity 12}
 \epsilon^2\matD f^\epsilon_i + \frac 1 {3\nu} \left( f^\epsilon_i-\overline{M}_{\bm{f}^{\epsilon},i}^{\mathrm{eq}}\right) &\in \mathcal{O}\left(\epsilon^2\right) \quad \text{in } I\times\Omega\times Q 
\end{align}
for \(i=0,1,\dots, q-1\).
Thus, the conditions in Definition~\ref{def:limitConsistency} are verified at order \(k=2\).
\end{proof}
\begin{proposition}
Under the premise of linear stability of the velocity discretization, solutions of the family of discrete velocity BGKBEs \(\mathcal{F\!G}\) \eqref{eq: lbm discrete velocity 13} converge weakly to solutions of the incompressible NSE \eqref{eq:incNSE} with second-order limit consistency.  
\end{proposition}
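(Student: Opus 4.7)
The plan is to apply the abstract embedded-limit machinery of Lemma~\ref{lem:weakConvWithLC}, chaining two arrows that have already been established in the preceding material. Identifying $\mathcal{A}$ with $\mathcal{F}$, $\mathcal{B}$ with $\mathcal{F\!G}$, and $\mathcal{C}$ with the incompressible NSE~\eqref{eq:incNSE}, the target conclusion is precisely the concatenation
\begin{align*}
\mathcal{F\!G} \xrightarrow[\mathcal{O}(h^2)]{h\searrow 0} \mathrm{NSE} \equiv \bigl( \mathcal{F} \xrightharpoonup[]{h\searrow 0} \mathrm{NSE} \bigr) \circ \bigl( \mathcal{F\!G} \xhookrightarrow[\mathcal{O}(h^2)]{} \mathcal{F} \bigr)
\end{align*}
in the sense of Figure~\ref{fig:limitCons}.

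First I would invoke the Saint-Raymond diffusion-limit theorem restated in Section~\ref{sec:continuousEquations} to furnish the left-hand arrow $\mathcal{F} \xrightharpoonup[]{h\searrow 0} \mathrm{NSE}$. Under the stated hypotheses on the initial fluctuations $g_h^0$, the weak solutions $f^h = M(1 + h g_h)$ of the $h$-parametrized BGKBE~\eqref{eq:lbmDiscreteVelocity3} produce velocity moments $\langle g_h \bm{v}\rangle$ converging, along a subsequence and in $w\text{-}L^1_{\mathrm{loc}}$, to a Leray-type weak solution $\bm{u}$ of~\eqref{eq:incNSE} with pressure determined by the solenoidal constraint.

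Next, Theorem~\ref{theo: lbm discrete velocity} supplies the right-hand embedding: the sequence $\mathcal{F\!G}$ is limit consistent of order two to $\mathcal{F}$ with pointwise truncation residual in $\mathcal{O}(h^2)$, while the quadrature moments $n_{f_i^h}, \bm{u}_{f_i^h}$ approximate the integral moments $n_{f^h}, \bm{u}_{f^h}$ with errors $\mathcal{O}(h^2)$ and $\mathcal{O}(h)$ respectively by Lemma~\ref{lem:discVeloMomentApprox}. Composing the two arrows via Lemma~\ref{lem:weakConvWithLC} then yields weak convergence of the hydrodynamic moment $\bm{u}_{f_i^h}$ to $\bm{u}$, inheriting the order $k=2$ of the limit-consistent embedding from the truncation estimate~\eqref{eq:lbmDiscreteVelocity12}.

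The main obstacle is structural rather than computational: the embedded-limit lemma is a formal concatenation, and all the nontrivial analysis (weak compactness of $(Mg_h)$, passage to the limit in the quadratic advection, identification of the pressure) is inherited from Saint-Raymond's framework. What remains to check is that the extra $\mathcal{O}(h^2)$ defect introduced by truncating the Maxwellian at second order and replacing $\Rz^d$ by the $D3Q19$ lattice vanishes in the weak-$L^1_{\mathrm{loc}}$ topology underlying the limit; this follows by combining dominated convergence with the uniform bounds on the discrete moments granted by the Gauss--Hermite exactness underlying the weights~\eqref{eq:weights}. A fully quantitative rate would additionally demand the stability hypothesis deferred by the authors, but for the weak convergence asserted in the proposition the limit-consistent embedding of order two is sufficient, so the result is obtained as a direct corollary of Theorem~\ref{theo: lbm discrete velocity} and Lemma~\ref{lem:weakConvWithLC}.
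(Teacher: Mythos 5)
Your proposal is correct and follows essentially the same route as the paper: the paper's proof is precisely the one-line concatenation $\mathcal{F\!G} \xrightarrow[\mathcal{O}(h^{2})]{h \searrow 0} \mathrm{NSE} \equiv ( \mathcal{F} \xrightharpoonup[]{h \searrow 0} \mathrm{NSE} ) \circ (\mathcal{F\!G} \xhookrightarrow[\mathcal{O}(h^{2})]{} \mathcal{F})$ obtained by applying Lemma~\ref{lem:weakConvWithLC} with the Saint-Raymond diffusion limit as the left arrow and Theorem~\ref{theo: lbm discrete velocity} as the embedding. Your additional remarks on the weak-$L^1_{\mathrm{loc}}$ topology and the deferred stability hypothesis go beyond what the paper writes but do not change the argument.
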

\begin{proof}
With Lemma~\ref{lem:weakConvWithLC}, we can unfold the convergence of \(\mathcal{F\!G}\) with second order consistent discretization, i.e.\  
\begin{align}
\mathcal{F\!G} \xrightarrow[\mathcal{O}\left(\epsilon^{2}\right)]{\epsilon \searrow 0} \mathrm{NSE} \equiv ( \mathcal{F} \xrightharpoonup[\mathcal{O}(\epsilon^{2})]{\epsilon \searrow 0} \mathrm{NSE} ) \circ (\mathcal{F\!G} \xrightarrow[\mathcal{O}\left(\epsilon^{2}\right)]{\epsilon\searrow 0} \mathcal{F}) .
\end{align}
\end{proof}

\subsection{Discretization of space and time}

Through completing the discretization, the parameter \(\epsilon\) is glued to the space-time grid. 
This procedure resembles the typical connection of relaxation and discretization limit in LBM \cite{simonis2020relaxation} to uphold the consistency to the initially targeted NSE \eqref{eq:incNSE}. 
In the following, we provide a consistent discretization in exactly that sense. 

Let \(\Omega\) be uniformly discretized by a Cartesian grid \(\Omega_{h}\) with \(N+1\) nodes \(x\) per dimensional direction, where \(h\) denotes the grid parameter (as abstracted in Figure~\ref{fig:limitConsAbs}). 
For the largest cubic subdomain \(\tilde{\Omega}_{h} \subseteq \Omega_{h} \subseteq \Omega\) we define \(\triangle x = \vert \tilde{\Omega}_{h} \vert^{1/d} / N \). 
Via imposing the spatio-temporal coupling \(\triangle t \sim \triangle x^{2}\), we retain a positioning constraint \(v_{i\alpha} = \flat \triangle x /\triangle t\) on the grid nodes for \(\flat \in \left\{0,\pm 1\right\}\) (up to three shells) and define the discrete time interval \(I_{\triangle t} \coloneqq \left\{ t=t_0+k\triangle t \;\vert\; t_{0} \in I , k\in\mathbb{N} \right\} \subseteq I \). 
Here and below, the continuous \(\epsilon\)-parametrization is linked with the grid parameters by mapping \(\epsilon \mapsfrom h =  \triangle x \sim \sqrt{ \triangle t }\).
\begin{definition}
Let \(I_{h} \times \Omega_{h} \times Q\) be the discrete version of \(\mathfrak{R}\) and constructed as above. 
Assume that \(f^{h}\) denotes a solution to the BGKBE \eqref{eq: lbm discrete velocity 3}. 
Continuing from \eqref{eq:discreteVeloPDF}, for \(i = 0,1,\ldots, q-1\) and \(\chi \in \mathbb{R}\), we define the \(i\)-th population
\begin{align}\label{eq:populations}
f_{i}^{h} \left( t + \chi h^{2} \right) = f_{i}^{h} \left( t + \chi h^{2}, \cdot \right) \coloneqq f_{i}^{h} \left( t + \chi h^{2}, \bm{x} + \bm{v}_{i} \chi h \right) 
\end{align}
on the space-time cylinder \((t,\bm{x}) \in I_{h} \times \Omega_{h}\). 
\end{definition}
Below, external forces \(\bm{F} = 0\) are neglected. 
For the inclusion of forces, we refer the interested reader to classical results on forcing schemes (e.g. \cite{guo2002forcing}). 

Let $f^h$ be at least of class \(C^{3}\) with respect to \(\matDil\). 
We successively assume that \((\matDil)^{3} f_{i}^{h} \in \mathcal{O}(1)\) for \(i=0,1,\ldots ,q-1\) as functions of \(h\). 
To derive the LBE, we use three discrete points in time \((t, t+ h^{2}/2, t+ h^{2})\) (see Figure~\ref{fig:time}), where the midpoint \(t+h^{2}/2\) is a ghost node for the derivation and cancels in the final evolution equation of the scheme. 
Since the width of the stencil is mapped from the initial \(\epsilon\)-scaling, limit consistency is ensured. 
\begin{figure}[ht!]
\centering
\includegraphics[scale=1]{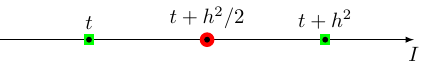}
\caption{Time steps used for the space-time discretization. 
The final LBE operates on the green squares only. 
Red points are canceled in the derivation.}
\label{fig:time}
\end{figure}

\begin{definition}\label{def:finiteDifferenceOperators}
Let \(g \colon \mathbb{R} \to \mathbb{R}, y \mapsto g(y)\) denote a \(C^{\infty}\)-function and \(a\in \mathbb{R}_{\geq 0}\) be a point on the non-negative real line. 
We define the following finite difference operations: 
\begin{enumerate}
    \item central difference 
    \begin{align}\label{eq:centralFD}
        g^{\prime}\left( y\right) = \frac{1}{a} \left( g\left( y + \frac{a}{2}\right) - g\left(y - \frac{a}{2}\right) \right) + \mathcal{O}\left( a^{2}\right) , 
    \end{align}
    \item forward difference 
    \begin{align}\label{eq:forwardFD}
        g^{\prime} \left(y\right) = \frac{1}{a} \left( g\left(y+a\right) - g\left( y\right) \right)+ \mathcal{O} \left( a \right) , 
    \end{align}
    \item Taylor's theorem 
    \begin{align}\label{eq:taylorTheorem}
        g\left( y\right) = g\left( a \right) + g^{\prime}\left( a \right) \left( y - a\right) + \mathcal{O}\left( \left\vert y - a\right\vert^{2}\right) .
    \end{align}
\end{enumerate}
\end{definition}
Starting with a central difference of \(\mathrm{D}/(\mathrm{D}t)f_{i}^{h}\) at \(t+h^{2}/2\) yields
\begin{align} \label{eq: lbm discrete lbm 1}
 h^2 \matD f^{h}_{i} \left( t + \frac{1}{2}h^2 \right) = f^{h}_{i} \left( t + h^{2} \right) - f^{h}_{i} \left( t \right) + R^{(5)}_{t,\bm{x}} \quad \text{in } I_{h} \times \Omega_{h} .
\end{align}
Taylor's theorem applied to \(f_{i}^{h}\) at at \(t+h^{2}/2\) with the expansion point \(t\), and a forward difference \((\matDil) f^{h}_{i}\) at \(t\) yields
\begin{align} 
 f^{h}_{i} \left( t + \frac{1}{2} h^{2} \right) 
 &= 
 f^{h}_{i} \left( t \right) + \frac{1}{2} h^{2} \matD f^{h}_{i} \left( t \right)  + R^{(6)}_{t,\bm{x}} \nonumber \\
 &= 
 f^{h}_{i} \left( t \right) + \frac{1}{2} \left[ f^{h}_{i} \left( t + h^{2} \right) - f^{h}_{i} \left( t \right)  \right] + R^{(7)}_{t,\bm{x}} \label{eq: lbm discrete lbm 2}
\end{align}
in \(I_{h} \times \Omega_{h}\). 
Provided that \((\matDil)^{2} f_{i}^{h} \in \mathcal{O}( 1)\) and \((\matDil)^{3} f_{i}^{h} \in \mathcal{O}( 1)\), we deduce 
\begin{itemize}
\item $R^{(5)}_{t,\bm{x}} \in \mathcal{O}(h^6)$,
\item \(R_{t,\bm{x}}^{(6)} \in \mathcal{O} (h^{4})\), and
\item \(R_{t,\bm{x}}^{(7)} \in \mathcal{O} (h^{4})\) for \((t,\bm{x}) \in I_{h} \times \Omega_{h}\). 
\end{itemize}
To match the observation point for the discrete moments as functions of \(h\), we shift \(n_{\bm{f}^{h}}\) and \(\bm{u}_{\bm{f}^{h}}\) by \(h^{2}/2\), respectively in space-time. 
Recalling the diffusive limit (cf. \eqref{eq:mass} and \eqref{eq:momentum}), $(\matDil) n_{f^{h}} = 0$ and $(\matDil) \bm{u}_{f^{h}} = 0$ instantly follows. 
Via \((\matDil)\)\eqref{eq: lbm discrete velocity 9} and \((\matDil)\)\eqref{eq: lbm discrete velocity 9a}, we have for \((t,x) \in I_{h} \times \Omega_{h}\) that 
\begin{align}
\matD n_{\bm{f}^{h}} = \mathcal{O}(h^2), \label{eq:matDerivZeroth}\\
\matD \bm{u}_{\bm{f}^{h}} = \mathcal{O}(h), \label{eq:matDerivFirst}
\end{align}
Taylor expanding both, \eqref{eq:matDerivZeroth} and \eqref{eq:matDerivFirst} for \((t,x) \in I_{h}\times \Omega_{h}\) leads to the respective approximations 
\begin{align} 
 n_{\bm{f}^{h}} \left( t + \frac{1}{2}h^2 \right)  
&=
 n_{\bm{f}^{h}} \left( t \right) + \frac{1}{2} h^{2}  \matD n_{\bm{f}^{h}} \left( t \right)  + R^{(8)}_{t,\bm{x}} \nonumber \\
&= 
 n_{\bm{f}^{h}} \left( t \right) + R^{(9)}_{t,\bm{x}},   \label{eq:lbm discrete lbm density}\\
 \bm{u}_{\bm{f}^{h}} \left( t + \frac{1}{2} h^{2} \right) 
&= 
\bm{u}_{\bm{f}^{h}} \left( t \right) + \frac{1}{2} h^{2} \matD \bm{u}_{\bm{f}^{h}} \left( t\right)  + R^{(10)}_{t,\bm{x}}  \nonumber\\
&= 
\bm{u}_{\bm{f}^{h}} \left( t \right) + R^{(11)}_{t,\bm{x}}. \label{eq: lbm discrete lbm 2a}
\end{align}
with remainder terms $R^{(8)}_{t,\bm{x}}, R^{(9)}_{t,\bm{x}}, R^{(10)}_{t,\bm{x}} \in \mathcal{O} ( h^4 ) $ and $R^{(11)}_{t,\bm{x}} \in \mathcal{O} ( h^3 )$.
At last, we compute the lattice Maxwellian for \((t,x) \in I_{h} \times \Omega_{h}\) with Taylor's theorem of \eqref{eq:discreteVeloMax} as
\begin{align} \label{eq: lbm discrete lbm 3}
 \overline{M}^{\mathrm{eq}}_{\bm{f}^{h},i} \left( t + \frac{1}{2} h^{2}, x + v_{i} \frac{1}{2} h^{2} \right) = \overline{M}^{\mathrm{eq}}_{\bm{f}^{h},i} \left( t, x \right) + R^{(12)}_{t,\bm{x}} 
\end{align}
To specify the error we use \eqref{eq:lbm discrete lbm density} and \eqref{eq: lbm discrete lbm 2a} for \(\overline{M}_{f_{i}^{h}}^{\mathrm{eq}}\) as given in \eqref{eq: lbm discrete lbm 3}. 
Further, from the meanwhile gathered assumptions that \((\mathrm{D}/(\mathrm{D}t))^{j} f_{i}^{h} \in \mathcal{O}(1)\) for \(j=0,1,2,3\) we deduce that \((\mathrm{D}/(\mathrm{D}t))\overline{M}_{f_{i}^{h}}^{\mathrm{eq}} \in \mathcal{O}(1)\). 
Hence, being prefactored by \(h^{2}\) in the leading order, the remainder $R^{(12)}_{t, \bm{x}} \in \mathcal{O}(h^2)$ for all $(t,\bm{x}) \in I_h \times \Omega_{h}$. 

\begin{definition}
Based on the above discretizations \eqref{eq:populations}, \eqref{eq: lbm discrete lbm 1}, \eqref{eq: lbm discrete lbm 2}, and \eqref{eq: lbm discrete lbm 3}, we construct the lattice Boltzmann equation (LBE) for a space-time cylinder \(I_{h} \times \Omega_{h}\) and \(i=0,1,\ldots , q-1\) discrete velocities via reordering the terms of \eqref{eq: lbm discrete velocity 10}\(\vert_{(t+(1/2) h^{2}, x+\bm{v}_{i} (1/2) h^{2})}\) to 
\begin{align} \label{eq: lbm discrete lbm 4}
 f_{i}^{h} \left( t + h^{2} \right) - f^{h}_{i} \left(  t \right) = -\frac{1}{3 \nu + \frac{1}{2}} \left[ f_{i}^{h} \left( t \right) - \overline{M}^{\mathrm{eq}}_{\bm{f}^{h},i} \left( t \right) \right] . 
\end{align}
Consequently, the family of LBEs reads 
\begin{align} \label{eq: lbm discrete lbm 6}
 \mathcal{G}:= \left( f_i^{h} \left( t+h^{2} \right) - f_{i}^{h} \left( t \right) + \frac{1}{3\nu + \frac{1}{2}} \left[ f_{i}^{h} \left( t \right) - \overline{M}^{\mathrm{eq}}_{f_{i}^{h}} \left( t \right) \right] = 0 \quad \text{in } I_{h} \times \Omega_{h} \times Q \right)_{h>0} .
\end{align}
\end{definition}

\subsection{Consistent lattice Boltzmann equations}

\begin{theorem} \label{theo: lbm discrete lbm}
Suppose that for given $h,\nu \in \mathbb{R}_{>0}$, $f^h$ is a weak solution of the BGKBE \eqref{eq: lbm discrete velocity 3} with moments $n_{f^{h}}$ and $\bm{u}_{f^{h}}$. 
Further, let $n_{f^{h}}$, $\bm{u}_{f^{h}}$, $(\matDil)^{j} f_{i}^{h}$ understood as functions of $h$ fulfill that
\begin{align} \label{eq: lbm discrete lbm 5}
 n_{f^{h}} \in \mathcal{O}(1) \quad & \text{in } I_h \times \Omega_h , \\ 
 \bm{u}_{f^{h}} \in \mathcal{O}(1) \quad & \text{in } I_h \times \Omega_h , \\ 
\left(\matD\right)^{j} f_{i}^{h} \in \mathcal{O}(1) \quad & \text{in } I_h \times \Omega_h , \quad\text{for } j= 0,1,2,3, 
\end{align}
for \(i=0,1,\ldots ,q-1\).
Then, the family $\mathcal{G}$ of LBEs is limit consistent of order two to the family $\mathcal{F}$ of BGKBEs in $I_h\times\Omega_h\times Q$.
\end{theorem}
\begin{proof}
Let $f^h$ be a solution of the BGKBE \eqref{eq: lbm discrete velocity 3} as specified above. 
Then, Theorem~\ref{theo: lbm discrete velocity} dictates the truncation error for discretizing \(\Xi\) down to \(Q\). 
In particular, for all \(i=0,1,\ldots, q-1\)
\begin{align}  
\mathcal{E}(h) \equiv \frac{6\nu}{6\nu+1} \left[ h^2\matD f^h_i + \frac{1}{3\nu} \left( f^{h}_{i}-\overline{M}_{\bm{f}^{h},i}^{\mathrm{eq}}\right)\right] \in \mathcal{O}\left( h^{2} \right) \quad \text{in } I\times\Omega\times Q .
\end{align}
Considering  $\mathcal{E}(h) \vert_{(t+(1/2) h^2, x + \bm{v}_{i}(1/2)h^{2})}$ and substituting the Taylor expansions and finite differences constructed in \eqref{eq: lbm discrete lbm 1}, \eqref{eq: lbm discrete lbm 2}, and \eqref{eq: lbm discrete lbm 3}, unfolds that
\begin{align}
 \frac{6\nu}{6\nu+1}  \biggl\{ h^2 &\matD f^h_i \left( t+\frac{1}{2} h^2\right) + \frac{1}{3\nu} \left[ f^h_i \left( t+ \frac{1}{2} h^2 \right) - \overline{M}_{\bm{f}^{h},i}^{\mathrm{eq}} \left( t+ \frac{1}{2} h^2 \right) \right] \biggr\} \nonumber \\
 & \approx \frac{6\nu}{6\nu+1} \biggl( f^h_i \left(t+h^2\right) - f^h_i(t) + \frac 1 {3\nu} \left\{ f^h_i(t) + \frac{1}{2} \left[ f^h_i \left(t+h^2 \right) - f^h_i (t) \right] - \overline{M}_{\bm{f}^{h},i}^{\mathrm{eq}} (t) \right\} \biggr) \nonumber \\
 &= f^h_i \left( t+h^2 \right) - f^h_i(t) + \frac{1}{3\nu+\frac{1}{2}}  \left[ f^h_i (t) -\overline{M}_{\bm{f}^{h},i}^{\mathrm{eq}}(t) \right] \in \mathcal{O}\left(h^2\right)
\end{align}
for $i=0,1,...,q-1$ and for all $t\in I_h$. 
The truncation error order is obtained from inserting the respective error terms \(R_{t,\bm{x},\bm{v}}^{(j)}\) for \(j=0,1,\ldots,12\). 
\end{proof}
\begin{proposition}
Assuming stability in terms of Definition~\ref{def:stabilityLeveque}, solutions to the family of LBEs \(\mathcal{G}\) converge weakly to solutions of the incompressible NSE \eqref{eq:incNSE} with second order limit-consistency and a truncation error \(\mathcal{O}(\triangle x^{2})\).  
\end{proposition}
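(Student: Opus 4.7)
The plan is to mirror the proof of the analogous proposition for $\mathcal{F\!G}$, combining the diffusion limit of Saint-Raymond with the newly established Theorem~\ref{theo: lbm discrete lbm}, and then translating the $h$-order into a $\triangle x$-order through the space-time coupling.

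First, I would invoke Lemma~\ref{lem:weakConvWithLC} with the identifications $\mathcal{A} = \mathcal{F}$, $\mathcal{B} = \mathcal{G}$, and $\mathcal{C} = \mathrm{NSE}$. The hypothesis of Lemma~\ref{lem:weakConvWithLC} requires two ingredients: a weak solution limit $\mathcal{F} \xrightharpoonup{h \searrow 0} \mathrm{NSE}$ and an order-$k$ limit consistent embedding $\mathcal{G} \xhookrightarrow[\mathcal{O}(h^k)]{} \mathcal{F}$. The first is precisely the restatement of Saint-Raymond's result \cite{saint-raymond2003bgk} recorded in Section~\ref{sec:continuousEquations}, where hydrodynamic moments of solutions $f_h$ of the $h$-parametrized BGKBE \eqref{eq:lbmDiscreteVelocity3} are shown to converge weakly towards weak solutions of the incompressible NSE \eqref{eq:incNSE}. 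The second is exactly Theorem~\ref{theo: lbm discrete lbm}, which yields $k=2$.

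Concatenating both arrows in the sense of the embedded limit definition produces
\begin{align}
\mathcal{G} \xrightarrow[\mathcal{O}\left(h^{2}\right)]{h \searrow 0} \mathrm{NSE} \equiv ( \mathcal{F} \xrightharpoonup[]{h \searrow 0} \mathrm{NSE} ) \circ (\mathcal{G} \xhookrightarrow[\mathcal{O}\left(h^{2}\right)]{} \mathcal{F}),
\end{align}
which, read in the weak-solution sense agreed upon in the remark following Lemma~\ref{pro:limSup}, asserts the weak convergence of solutions of $\mathcal{G}$ to weak solutions of the NSE with second-order limit consistency.

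The only remaining step is to recast the order in the abstract parameter $h$ as an order in the spatial mesh size. This is immediate from the parametrization coupling fixed at the outset of the subsection, namely $h \mapsfrom \triangle x$ (with $\triangle t \sim \triangle x^{2}$), so that $\mathcal{O}(h^{2}) = \mathcal{O}(\triangle x^{2})$ element-wise on the grid $I_{h}\times\Omega_{h}\times Q$. I expect no subtle obstacles here, since both building blocks are already in place; the only delicate point is the bookkeeping needed to ensure that the weak-convergence mode from Saint-Raymond's theorem is compatible with the pointwise truncation estimate supplied by Theorem~\ref{theo: lbm discrete lbm}, which is precisely the role of the embedded-limit formalism introduced before Lemma~\ref{lem:weakConvWithLC} and the standing stability assumption discussed in the hypothesis there.
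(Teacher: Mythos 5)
Your proposal is correct and follows essentially the same route as the paper: the paper's proof likewise applies Lemma~\ref{lem:weakConvWithLC} to concatenate Saint-Raymond's weak solution limit $\mathcal{F} \xrightharpoonup[]{h\searrow 0} \mathrm{NSE}$ with the second-order limit consistent embedding $\mathcal{G} \xhookrightarrow[\mathcal{O}(h^{2})]{} \mathcal{F}$ from Theorem~\ref{theo: lbm discrete lbm}, under the identification $h = \triangle x \sim \sqrt{\triangle t}$. Your additional remarks on the explicit identifications and on recasting $\mathcal{O}(h^{2})$ as $\mathcal{O}(\triangle x^{2})$ merely make explicit what the paper leaves implicit.
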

\begin{proof}
The weak convergence of \(\mathcal{G}\) with second order consistent discretization with \(\epsilon \mapsfrom h = \triangle x \sim \sqrt{\triangle t}\) is determined via the concatenation of weak solution functors 
\begin{align}
\mathcal{G} \xrightarrow[\mathcal{O}\left( h^{2} \right)]{h \searrow 0} \mathrm{NSE} \equiv ( \mathcal{F} \xrightharpoonup[\mathcal{O}(\epsilon^{2})]{\epsilon \searrow 0} \mathrm{NSE} ) \circ (\mathcal{G} \xrightarrow[\mathcal{O}\left( h^{2} \right)]{h\searrow 0} \mathcal{F})
\end{align}
according to Lemma~\ref{lem:weakConvWithLC}. 
\end{proof}
\begin{remark}
Here, we effectively derive the LBE by chaining Taylor expansions and finite differences with the inclusion of a shift by half a space-time grid spacing. 
As suggested in the literature \cite{ubertini2010three,dellar2013interpretation}, the latter substitutes the implicity, resembles a Crank--Nicolson scheme, and unfolds the LBE as a finite difference discretization of the BGKBE. 
In addition, we couple the discretization to a sequencing parameter which is responsible for the weak convergence of mesoscopic solutions to solutions of the macroscopic target PDE. 
This finding aligns with recent works \cite{bellotti2022finite,bellotti2022rigorous} which rigorously express lattice Boltzmann methods in terms of finite differences for the macroscopic variables. 
\end{remark}

\section{Conclusion}\label{sec:conclusion}

The notion of limit consistency is introduced for the constructive discretization towards an LBM for a given PDE. 
In particular, a family of LBEs $\mathcal{G}$ is consistently derived from a family of BGKBEs $\mathcal{F}$ which is known to rigorously limit towards a given partial differential IVP in the sense of weak solutions. 
Preparatively, the discrete velocity BGKBE $\mathcal{F\!G}$ is constructed by replacing the velocity space with a discrete subset. 
When completed, we discretize continuous space and time via chaining several finite difference approximations which leads to $\mathcal{G}$. 
In the present example, all of the discrete operations retain the rigorously proven diffusive limit of the BGKBE towards the NSE on the continuous level. 

It is to be noted that the term consistency in the derivation process refers to the level-by-level preservation of the \textit{a priori} rigorously proven limit. 
As such, our approach presents itself as mathematically rigorous discretization procedure, which produces a numerical scheme with a proven limit towards the target equation under diffusive scaling. 
Albeit the incompressible Navier--Stokes equations (NSE) are used as an example target system, the current approach is extendable to any other PDE. 
In addition, the notion of limit consistency is flexible in the sense that it can be applied to relaxation systems without the knowledge of an underlying kinetic equation system limiting to the targeted PDE. 
Hence, the sole constraint remaining, is the existence of an \textit{a priori} known continuous moment limit which rigorously dictates the relaxation process. 

Further, we provide a successive discretization by nesting conventional Taylor expansions and finite differences. 
The measuring of the truncation errors at all levels within the derivation enables to track the discretization state of the equations. 
Parametrizing the latter in advance allows to predetermine the link of discrete and relaxation parameters, which is primarily relevant to uphold the path towards the targeted PDE. 
The unfolding of LBM as a chain of finite differences and Taylor expansions under the relaxation constraint matches with results in the literature \cite{junk2001finite,bellotti2022finite} and thus validates the present work. 
Although, similarities to well-established references \cite{junk2000discretizations,junk2005asymptotic} are present, it is to be stressed that the purpose of limit consistency is to touch base with the previous construction steps \cite{simonis2020relaxation,simonis2022constructing} and enable a generic top-down design of LBM. 
Future studies as prepared in \cite{simonis2023pde} will complete the coherent top-down procedure to derive LBMs for large classes of PDEs via assembling the perturbative construction of generic relaxation systems proposed in \cite{simonis2020relaxation,simonis2022constructing} with the here derived consistent discretization. 

\textbf{\emph{Funding:}}
This work was partially funded by the "Deutsche Forschungsgemeinschaft" (DFG, German
Research Foundation) {\textendash} project 382064892, KR 4259/8-2.

\textbf{\emph{Author contribution statement:}}
\textbf{S.~Simonis}: 
Conceptualization, 
Methodology, 
Formal analysis, 
Validation, 
Investigation, 
Visualization, 
Writing - Original draft, 
Project administration, 
Resources; 
\textbf{M.~J.~Krause}: 
Conceptualization, 
Writing - Review {\&} Editing, 
Supervision, 
Funding acquisition. 
All authors read and approved the final manuscript.

%

\end{document}